\documentclass[11pt]{amsart}

\usepackage{lineno}
\usepackage[margin=1.2in]{geometry}

 \usepackage{color}
 \usepackage{amssymb}
\usepackage[utf8]{inputenc}
 \usepackage{soul}

 \usepackage{graphicx}
\usepackage{bbm}
\usepackage[title]{appendix}
\usepackage{color}
\usepackage{tikz-cd}

 \usepackage{graphicx}
\usepackage{tikz-cd}

\newcommand{\bea}{\begin{eqnarray}}
\newcommand{\eea}{\end{eqnarray}}

\allowdisplaybreaks

\newcommand{\Z}{\Bbb Z}
\newcommand{\Ind}{\text{Ind}}
\newcommand{\C}{\Bbb C}

\newcommand{\g}{  \mathfrak g}

\newtheorem{theorem}{Theorem}[section]
\newtheorem{lemma}[theorem]{Lemma}

\newtheorem{main}{Main Theorem}
\theoremstyle{definition}

\theoremstyle{remark}
\newtheorem{remark}[theorem]{Remark}

\theoremstyle{proposition}
\newtheorem{proposition}[theorem]{Proposition}
\theoremstyle{corollary}
\newtheorem{corollary}[theorem]{Corollary}
\numberwithin{equation}{section}

\newcommand{\KL}{\mathrm{KL}}

\begin{document}



\title[]{ Tensor category $\KL_k(\mathfrak{sl}_{2n})$ via  minimal affine $W$-algebras at the non-admissible level $k =-\frac{2n+1}{2}$  }

 \author[]{Dra\v zen  Adamovi\' c}
\address{Department of Mathematics, Faculty of Science \\
	University of Zagreb \\
	Bijeni\v cka 30 }

\email{adamovic@math.hr}

\author[]{Thomas Creutzig}

 \address{ Department of Mathematical and Statistical Sciences, University of Alberta, 632 CAB,
Edmonton, Alberta, Canada T6G 2G1 }
 
  \email{creutzig@ualberta.ca}

\author[]{Ozren Per\v se}
\address{Department of Mathematics, Faculty of Science \\
	University of Zagreb \\
	Bijeni\v cka 30
}

\email{perse@math.hr}

\author[]{Ivana Vukorepa}
\address{Department of Mathematics, Faculty of Science \\
	University of Zagreb \\
	Bijeni\v cka 30
}
\email{vukorepa@math.hr} 

 \begin{abstract}
 We prove that   $\KL_k(\mathfrak{sl}_m)$ is a semi-simple, rigid braided tensor category for all even $m\ge 4$, and $k= -\frac{m+1}{2}$ which generalizes result from \cite{APV}   obtained for $m=4$. Moreover, all modules in $\KL_k(\mathfrak{sl}_m)$ are simple-currents and they appear in the decomposition of conformal embeddings  $\mathfrak{gl}_m \hookrightarrow \mathfrak{sl}_{m+1} $ at level $ k= - \frac{m+1}{2}$ from \cite{AKMPP-16}.  For this we inductively identify minimal affine $W$-algebra
$ W_{k-1} (\mathfrak{sl}_{m+2}, \theta)$ as simple current extension of $L_{k}(\mathfrak{sl}_m) \otimes \mathcal H \otimes \mathcal M$, where $\mathcal H$ is the rank one Heisenberg vertex algebra, and $\mathcal M$ the singlet vertex algebra for $c=-2$. The proof  uses previously obtained results  for the  tensor categories of singlet algebra from \cite{CMY}.  We also classify all irreducible ordinary modules for  $ W_{k-1} (\mathfrak{sl}_{m+2}, \theta)$. The semi-simple part of the category of $ W_{k-1} (\mathfrak{sl}_{m+2}, \theta)$--modules comes from   $\KL_{k-1}(\mathfrak{sl}_{m+2})$, using quantum Hamiltonian reduction, but this  $W$-algebra also contains indecomposable ordinary modules.
 \end{abstract}
\maketitle

\section{Introduction}

\subsection{ Representation theory of affine vertex algebras}
Let $L_k(\g)$ be the simple affine vertex algebra of level $k$ associated to the simple Lie algebra $\g$. 
Most vertex algebras (maybe even all of them) can be constructed from affine vertex algebras via standard constructions like tensor products, extensions, orbifolds, cosets and quantum Hamiltonian reduction. This means that affine vertex algebras are very central in the theory of vertex algebras and in particular 
the representation theory of $L_k(\g)$  is one of the most important directions. It connects to many fundamental concepts as tensor categories, quantum groups, modular forms, combinatorial identities and various connections with physics. 

Recall that $L_k(\g)$ is rational if and only if $k$ is a positive integer \cite{FZ}.  When $k$ is not a positive integer, then some suitable categories of $L_k(\g)$--modules could be semi-simple, but in general the category of all weak modules is non-semisimple.  In particular, when $k$ is an admissible rational number, which is not integral, then $L_k(\g)$ is semi-simple in the category $\mathcal O$ (cf. \cite{Ara}), but it also contains indecomposable and logarithmic modules (cf. \cite{AdM-2009,  A-2019, ACG-2021,CRR-21}). 

Denote by $h^\vee$ the dual Coxeter number of $\g$. Most rational levels, such that $k+h^\vee$ is positive, are admissible. If the level is not admissible but $k+h^\vee$ is still positive then one expects rich and interesting representation categories. However not much, if any, technology is known to study these representation categories. 
In this work, we restrict to special non-admissible levels for $\g =  \mathfrak{sl}_m$ and we consider the in many respects best behaved category, the Kazhdan-Lusztig category. The point is that a good interplay between conformal embeddings, minimal $W$-algebras and VOA extensions allows us to study this category. 

\subsection{The  category $\KL_k(\g)$}
The category $\mathcal O$ contains an important sub-category, called the Kazhdan-Lusztig category $\KL_k(\g)$ introduced in  \cite{KL},   where it was proved that $\KL_k(\g)$  is semi-simple for generic levels $k$. The Kazhdan-Lusztig category is the subcategory of the category $\mathcal O$ of ordinary modules, i.e. modules with finite-dimensional conformal weight spaces. 

Arakawa's result implies that $\KL_k(\g)$ is semi-simple for $k$ admissible. This in turn implies that  $\KL_k(\g)$  for $k$ admissible is a braided tensor category \cite{CHY} and in most cases rigidity is already proven \cite{C, CGL, CKoL}.
But $\KL_k(\g)$ could be also semi-simple for certain non-admissible levels. It turns out that minimal $W$-algebras help to answer the question of semi-simplicity of  $\KL_k(\g)$ at certain levels. A level is called collapsing, if the simple minimal $W$-algebra of $\g$, $W_k(\g, \theta)$, collapses to its affine vertex subalgebra.
It was proved in \cite{AKMPP-20} that $\KL_k(\g)$ is semi-simple when $k$ is a collapsing level or when the minimal affine $W$-algebra $W_k(\g, \theta)$ is rational. This result is used in \cite{CY} to construct vertex tensor categories at these levels.  In \cite{AMP}, it was proved that  $\KL_k(\g)$ could be also semi-simple when $\g$ is a Lie superalgebra and the level $k$ is collapsing.  

We note that there is a notion of collapsing levels for non-minimal affine $W$-algebras (cf. \cite{AEM, AMP22}), but the properties of the quantum Hamiltonian reduction functor are presently not well enough understood so that we can only conjecture that for non-minimal collapsing levels $\KL_k(\g)$ is semi-simple. 
This conjecture is true in one instance as proven in \cite{APV}.
The main result of  \cite{APV} shows that $\KL_k(\mathfrak{sl}_4)$ is semi-simple for $k=-5/2$. There it is also shown that $W_k(\mathfrak{sl}_4, f_{subreg})$ collapses to the rank one  Heisenberg vertex algebra $\mathcal H$. Therefore this  is the first example of non-minimal collapsing levels for which one  can prove that the category $\KL_k(\g)$ is semi-simple. Although the representation theory of $L_k(\mathfrak{sl}_4)$ is quite similar to those of minimal collapsing levels, the proof of semi-simplicity was  much more complicated. The proof uses explicit knowledge of very complicated formulas for singular vector in the universal affine vertex algebra $V^k(\mathfrak{sl}_4)$, calculation of $C_2$-algebra and information on  singular vectors in the generalized Verma modules at level $k$.  So it is very hard to use the same method to prove semi-simplicity of $\KL_k(\g)$ for other non-minimal collapsing levels. For us the knowledge of the structure of $\KL_k(\mathfrak{sl}_4)$  for $k=-5/2$ is very important as it is a major ingredient of the base case of the induction that proves our main Theorem. We also need results from conformal embeddings and vertex tensor categories. 

\subsection{Conformal embeddings and vertex  tensor categories } A main result of \cite{CY} is that if  $\KL(\g)$ is semisimple then it is a vertex tensor category in the sense of Huang-Lepowsky-Zhang \cite{HLZ0}--\cite{HLZ8}. Of course one not only wants to know the existence of the tensor structure but also more structure as fusion rules and rigidity. It turns out that explicit knowledge of conformal embeddings together with the orbifold theory of Robert McRae \cite{M} are sufficient for that in our cases. 

Another peculiarity of the vertex algebra $L_k(\mathfrak{sl}_4)$ is that it appears in the decomposition of conformal embedding
$\mathfrak{gl}_4 \hookrightarrow \mathfrak{sl}_5$ at level $k=-5/2$ from \cite{AKMPP-16}. Moreover, all irreducible modules from $\KL_k(\mathfrak{sl}_4)$ appears in the decomposition of this conformal embedding. Using this result, together with methods from \cite{CY, M}, one gets that $\KL_k(\mathfrak{sl}_4)$ is a rigid, braided tensor category. 

The situation for $\mathfrak{sl}_{m}$ at level $k = - \frac{m+1}{2}$  is very similar thanks to \cite{AKMPP-16}. In particular the decomposition of the conformal embedding 
 \bea\mathfrak{gl}_m \hookrightarrow \mathfrak{sl}_{m+1} \ \ \mbox{at level} \ k = - \frac{m+1}{2} \  \mbox{for}  \ m \ge 4 \label{conf-embed}
 \label{eq:ce}
  \eea
 is given there. 
  So one expects that $\KL _{k} (\mathfrak{sl}_m)$ for $k = - \frac{m+1}{2}$  is also  a rigid tensor category for all $m \ge 4$.  $L_{k} (\mathfrak{sl}_{m+1})$ for $k = - \frac{m+1}{2}$  is admissible if and only if $m$ is even, thus only for even $m$  one expects to get similar vertex tensor category structures as in the case $m=4$. The cases when $m$ is odd are more complicated and not all modules in $\KL_k(\mathfrak{sl}_m)$ appear in the decomposition of our conformal embedding.
Another motivation for studying levels $k = - \frac{m+1}{2}$ is that these levels are collapsing for certain non-minimal $W$-algebras (cf. \cite{AMP22}). In particular our main Theorem gives a whole family of examples of semisimple $\KL_k(\g)$ at collapsing levels $k$ for non-minimal $W$-algebras.

\subsection{Main results of the paper.}

Our main result is the structure of $\KL_k(\mathfrak{sl}_{m})$ for even $m \geq 4$ and $k=-\tfrac{m+1}{2}$.
\begin{main}\label{mainKL}
 $\KL_k(\mathfrak{sl}_{m})$ is a semi-simple, rigid braided tensor category for all even $m\ge 4$ and $k=-\tfrac{m+1}{2}$.
 Modules in $\KL_k(\mathfrak{sl}_{m})$ are simple currents, and all of them appear in the decomposition of conformal embedding (\ref{conf-embed}).
 \end{main}
  The proof of the theorem uses  minimal affine $W$-algebra $W_{k-1} (\mathfrak{sl}_{m+2}, \theta)$ and we prove the following properties. 
  \begin{main}\label{mainW} Assume that $m$ is even, $m \ge 4$.
 \begin{itemize} 
\item[(1)]  $W_{k-1} (\mathfrak{sl}_{m+2}, \theta)$ is a simple current extension of $L_{k}(\mathfrak{sl}_m) \otimes \mathcal H \otimes \mathcal M$, where $\mathcal H$ is the rank one Heisenberg vertex algebra, and $\mathcal M=\mathcal M(2)$ the singlet vertex algebra for $c=-2$

\item[(2)] Assume that $W$ is a highest weight  module from  $\KL_{k-1}(\mathfrak{sl}_{m+2})$. Then $H_{\theta} (W)$ is  an irreducible $W_{k-1} (\mathfrak{sl}_{m+2}, \theta)$--module  obtained  by induction from  $L_{k}(\mathfrak{sl}_m) \otimes \mathcal H \otimes \mathcal M$--module  $L_{k}(\mathfrak{sl}_m) \otimes \mathcal F^{\ell} _a \otimes \mathcal M_b$, where $ \mathcal F^{\ell} _a$ is an irreducible  $\mathcal H$--module, and $\mathcal M_b$ atypical $\mathcal M$--module (cf. Section \ref{main-sect} for notation).
\end{itemize}
\end{main}
Both these main Theorems are Theorem \ref{main} and the complete section \ref{main-sect} is devoted to its proof. The argument is an induction for $m$ and our base case is the main result of \cite{APV}. In order to do the induction step we need to recall some known result on Fock modules, the $p=2$ singlet algebra $\mathcal M$, the theory of simple current extensions of vertex algebras, conformal embeddings and minimal $W$-algebras. This is the content of sections \ref{secVOA} and \ref{Wetal}.
Using this background, the induction goes in several steps: 
\begin{enumerate} 
\item
Assume that Theorem \ref{mainKL} is true for some even integer $m$. Then one can use the simple currents together with the simple currents of the Heisenberg algebra $\mathcal H$ and those of the $p=2$ singlet algebra $\mathcal M$ to construct a certain simple current extension of $L_k(\mathfrak{sl}_m) \otimes \mathcal H \otimes \mathcal M$. This extension is a simple vertex algebra such that it has fields of conformal weight $3/2$ and in fact it is strongly generated by those together with the weight one fields of  $L_k(\mathfrak{sl}_m) \otimes \mathcal H$. There are strong uniqueness Theorems for vertex algebras of such type, e.g. a uniqueness Theorem of minimal $W$-algebras \cite[Theorem 3.1]{ACKL} and a uniqueness Theorem of hook-type $W$-algebras of type $A$ \cite[Lemma 6.3]{CL1}. The latter immediately (and the former with a bit of effort) tells us that this extension is 
 isomorphic to the simple minimal $W$-algebra $W_{k-1} (\mathfrak{sl}_{m+2}, \theta)$.
 \item We can now use the theory of vertex algebra extensions \cite{CKM1, CMY-limit} to study ordinary modules for this minimal $W$--algebra. In particular since simple modules of $\mathcal H$ are just Fock modules, since modules of $\mathcal M$ and their fusion rules are known \cite{CMY0, CMY} and since ordinary modules of $L_k(\mathfrak{sl}_m)$ are understood by induction hypothesis we can relatively easily determine all  ordinary modules for the minimal $W$-algebra, see Lemmata \ref{lower-bound} and \ref{lowerbound-tipical}. We find indecomposable modules. 
\item The next task is to determine those ordinary modules that come from minimal quantum Hamiltonian reduction of modules in $\KL_{k-1}(\mathfrak{sl}_{m+2})$. A necessary condition for this are certain weight and conformal weight conditions that we analyze next, see Lemma \ref{QHlowerbounded}.
\item  This result doesnot prove semisimplicity yet, but it is good enough to tell us that  $\KL_{k-1}(\mathfrak{sl}_{m+2})$ is of finite length and so we get that this is a vertex tensor category thanks to the main result of \cite{CY}, see Lemma \ref{KLcat}.
\item  Next, the existence of tensor category together with the conformal embedding \eqref{eq:ce} of \cite{AKMPP-16} and together with McRae's orbifold theory \cite{M} tell us that those simple modules of $\KL_{k-1}(\mathfrak{sl}_{m+2})$ that appear in the conformal embedding are all simple currents and we also get their fusion rules, see Corollary \ref{cor:sc}.
\item It remains to show that those simple modules that appear in the conformal embedding are all simple modules in $\KL_{k-1}(\mathfrak{sl}_{m+2})$. 
Recall that  a necessary condition for modules coming  from minimal quantum Hamiltonian reduction  are certain weight and conformal weight conditions and there are a few possibilities that we now need to exclude. We prove that if these additional possibilities were coming from  $\KL_{k-1}(\mathfrak{sl}_{m+2})$ via minimal quantum Hamiltonian reduction then this would contradict certain fusion rules, i.e. we get a contradiction, see Lemma \ref{nije+-1}. 
\item All these results together conclude our proof of the induction step and that is summarized in Theorem \ref{induction step}. 
\end{enumerate}
 
 We also present a  complete list of irreducible ordinary modules for $W_{k-1} (\mathfrak{sl}_{m+2}, \theta)$ in Section \ref{Word}. We prove:  
\begin{itemize}
\item Each irreducible, ordinary   $W_{k-1} (\mathfrak{sl}_{m+2}, \theta)$--module is obtained  by induction from  $L_{k}(\mathfrak{sl}_m) \otimes \mathcal H \otimes \mathcal M$--module  $U^{n} _i \otimes \mathcal F^{\ell} _a \otimes \mathcal L_{\nu}$, where $ U^{n} _i $ belongs to  $KL_k(\mathfrak{sl}_m)$, $\mathcal F^{\ell} _a$ is an irreducible  $\mathcal H$--module, and $\mathcal L_{\nu} $ is irreducible  $\mathcal M$--module (typical or atypical).
\item Moreover, the category of ordinary  $W_{k-1} (\mathfrak{sl}_{m+2}, \theta)$--modules is not semi-simple.
\end{itemize}

 We conclude this work with an alternative proof of the decomposition of the minimal $W$--algebra into modules of the affine subVOA times $\mathcal M$ in section \ref{Wdecomp}.  Moreover, we prove in Theorem   \ref{w-decomp-nova}
  that our minimal affine $W$-algebra admits the following realization:
  \begin{itemize}
  \item $W_{k-1} (\mathfrak{sl}_{m+2}, \theta) = \mbox{Com}( L_{k} (\mathfrak{sl}_{m+1})  \otimes \mathcal S)$, where $\mathcal S$ is the $\beta \gamma$ vertex algebra.
  \end{itemize}
 The advantage of this second argument here is that it also holds for odd $m$.

\subsection{Outlook}

As outlined before, our present work was made possible by  various quite recent results of our research teams. These are a good understanding of conformal embeddings \cite{AKMPP-16, AKMPP-JA, AKMPP-17,AKMPP-18, AMPP-20}, collapsing levels \cite{AKMPP-20, AMP22, AEM}, its implications on vertex tensor categories \cite{CY} and finally a full understanding of the representation theory of the singlet algebras \cite{CMY0, CMY}. 
 
Previously some semisimplicity and tensor category results for $\KL_k(\g)$ at rational but non-admissible levels had been obtained, but they always used collapsing levels directly or that the minimal $W$-algebra is rational. The novelty of our work in this direction is that the minimal $W$-algebra has a quite complicated representation theory, but nonetheless it was still possible to use it to show semisimplicity of $\KL_k(\g)$. 

Already the singlet algebras times a Heisenberg algebra allows for many interesting VOA extensions \cite{CRW, CR2} and we expect that more affine VOAs at rational and non-admissible levels times suitable singlet algebras and times a Heisenberg VOA extend to certain $W$-algebras. This deserves further study, but as mentioned before,  the general quantum Hamiltonian reduction isn't as well understood as the minimal reduction and so it will be considerably more difficult to infer interesting statements about $\KL_k(\g)$ in those cases.


\subsection*{Acknowledgements}

 The work of T.C. is supported
by NSERC Grant Number RES0048511.  D.A., O. P. and I. V. are  partially supported by the
QuantiXLie Centre of Excellence, a project cofinanced
by the Croatian Government and European Union
through the European Regional Development Fund - the
Competitiveness and Cohesion Operational Programme
(KK.01.1.1.01.0004).

  \section{Underlying vertex algebras and their categories}\label{secVOA}

  \subsection{Heisenberg vertex algebra $\mathcal H$}
  \label{Heisenbeg-voa}
  Let $\mathcal H$ be the rank one Heisenberg vertex algebra generated by  $h = h(-1){\bf 1}$,  and standard conformal vector $\omega_{st} = \frac{1}{2} h(-1) ^2 {\bf 1}$. For $\alpha \in {\C}$, let $\mathcal F_{\alpha}$ be the Fock $\mathcal H$--modules generated by highest weight vector $v_{\alpha}$ such that
  $$ h(n) v_{\alpha} = \delta_{n,0} \alpha v_{\alpha} \quad (n \ge 0). $$
  The lowest conformal weight  of $\mathcal F_{\alpha}$ with respect to $\omega_{st}$ is $\frac{1}{2} \alpha ^2$.
 It is sometimes convenient to rescale $h \mapsto \sqrt{\ell} h$. We then denote by $\mathcal F^\ell_\alpha$ the Fock module of weight $\alpha$ for the rescaled Heisenberg field. 
 Its lowest conformal weight is then $\frac{\alpha^2}{2 \ell}$.  
  Let $\mathcal C^\ell_{\mathcal F}$ be the category of Fock modules with real weight with respect to the rescaled Heisenberg field. It is a vertex tensor category \cite[Theorem 2.3]{CKLR} with fusion rules
  \[
  \mathcal F_{\alpha}^\ell \times  \mathcal F_{\beta}^\ell =  \mathcal F_{\alpha+ \beta}^\ell
  \]
  and braiding \cite[Remark 2.13]{CGR} or \cite[Proposition 3.11]{ALSW} 
  \[
  c_{ \mathcal F_{\alpha}^\ell,  \mathcal F_{\beta}^\ell} = e^{\pi i \frac{\alpha\beta}{\ell}} \text{Id}_{ \mathcal F_{\alpha+\beta}^\ell}.
  \]

 \subsection{Singlet vertex algebra $\mathcal M$}
 
  One of our main building blocks in our paper is the singlet algebra $\mathcal M(2)$, whose category $\mathcal O_{\mathcal M}$ of finite-length grading-restricted generalized modules is completely understood thanks to \cite{CMY0,CMY}.
The singlet vertex algebra $\mathcal M(p)$   is  a non-rational vertex algebra whose representation theory has  attracted a lot of interest  (cf. \cite{A-2003, AdM-2007, CM14, CMW}). In the case $p=2$, the singlet algebra is isomorphic to the $W$-algebra $W (\mathfrak{sl}_3, f_{princ})$ at central charge $c=-2$ (cf. \cite{Wa}).
 
  We will recall the necessary results and introduce a convenient notation. Firstly we set $\mathcal M = \mathcal M(2)$ for the singlet VOA itself. 
 Two classes of modules of $\mathcal M$ are denoted by  $\mathcal M_{r, 1}$ for $r \in \mathbb Z$ and $\mathcal F_{\nu}$ for $\nu \in \mathbb C$.
 These modules are all simple, except for the $\mathcal F_{\nu}$ and $\nu$ integer, and they give a complete list of inequivalent simple modules. 
 We want to avoid confusion with Fock modules and so we will use the notation $\mathcal M_{r-1} := \mathcal M_{r, 1}$ and $\mathcal V_{\nu}:= \mathcal F_{-\nu}$. 
 With this notation the fusion rules take the convenient form  (cf. \cite{AM17, CMY}):
 \begin{equation}
 \begin{split}
 \mathcal M_i \times \mathcal M_j &= \mathcal M_{i+j}, \\
  \mathcal M_i \times \mathcal V_\nu &= \mathcal V_{i+\nu} .
 \end{split}
 \end{equation}
 If $\nu = i \in \mathbb Z$, then there is a non-split exact sequence
 \begin{equation}\label{eq:ses}
 0 \rightarrow \mathcal M_i \rightarrow \mathcal V_i \rightarrow \mathcal M_{i+1} \rightarrow 0.
 \end{equation}

  The conformal weights $\Delta$ of the top levels of these modules are
  \[
  \Delta(\mathcal M_i) = \frac{|i|(|i|+1)}{2}, \qquad 
  \Delta(\mathcal V_\nu) = \frac{  \nu   (  \nu  +1)}{2}. 
  \]
  
  Note that the formulas for conformal weights are different, and only for atypical modules it uses the absolute value.   The reason is in the fact that for $i \ge 0$, $\mathcal M_i$ is generated by singular vector of the Fock module, but for $i <0$, $\mathcal M_i$ is generated by different Virasoro singular vector inside of  the Fock module (cf. \cite{AdM-2007}).
  
  The singlet algebra $\mathcal M$ allows for interesting extensions, most importantly the simple current extensions 
  \[
 \mathcal A=  \bigoplus_{i \in \mathbb Z} \mathcal M_i \qquad \text{and} \qquad  \mathcal W=  \bigoplus_{i \in 2\mathbb Z} \mathcal M_i \
  \]
  are the well-known symplectic fermion algebra $\mathcal A$ and its even subalgebra, the $p=2$ triplet algebra $\mathcal W$ (cf. \cite{Abe, AM08, Ka91}). 
  Extensions of a vertex superalgebra in a vertex tensor category are in one-to-one correspondence with commutative superalgebra objects in the category \cite{CKL}. In particular the braiding on the simple currents satisfies $c_{\mathcal M_i, \mathcal M_j} = (-1)^{ij} \text{Id}_{\mathcal M_{i+j}}$. 
  

  \subsection{The auxillary VOA $\mathcal U$}\label{auxVOA}
  
  Let $\mathcal U^n$ be a VOA with a semisimple vertex tensor category of modules $\mathcal C_{\mathcal U}^n$ whose simple objects are $\mathcal U^n_i$ for integer $i$, fusion rules
  \[
  \mathcal U^n_i \times \mathcal U^n_j = \mathcal U^n_{i+j}
  \]
  and braiding
  \[
  c_{\mathcal U^n_i, \mathcal U^n_j} = e^{ \pi i  \frac{2ij}{m}} \text{Id}_{\mathcal U^n_{i+j}}
  \]
  where is $m= 2n$. We assume that the conformal weight of the top level of $U^n_i$ is $\frac{i^2}{m} + |i|$ and we also assume all these modules to be ordinary.
  
  \subsection{Simple current extensions}\label{extVOA}
  
  We consider the extension 
  \[
  \mathcal W = \bigoplus_{i \in \mathbb Z} \mathcal U_i^n \otimes \mathcal F_i^\ell \otimes \mathcal M_i
  \]
  for $\ell =  -\frac{m}{m+2}$, $m =2n$ and $n$ a positive integer at least equal to two. The conformal weight of the top level of the 
  summand $\mathcal W_i = \mathcal U_i^n \otimes \mathcal F_i^\ell \otimes \mathcal M_i$ is
    \[
   \Delta(\mathcal W_i) = \frac{i^2}{m} + |i| - i^2 \frac{m+2}{2m} + \frac{i^2 + |i|}{2} = \frac{3}{2} |i|.
  \]
  $\mathcal W$ is thus a (super)VOA extension \cite{CKL} and we need to determine if it is even or not.  
 Let $\mathcal C$ be the Deligne product $\mathcal C_{\mathcal U}^n \boxtimes \mathcal C^\ell_{\mathcal F} \boxtimes \mathcal O_{\mathcal M}$. It is a vertex tensor category of $\mathcal U^n \otimes \mathcal H \otimes \mathcal M$--modules \cite[Theorem 5.5]{CKM2} and the braiding satisfies \cite[Remark 5.3]{CKM2}
\[
c_{\mathcal U_i^n \otimes \mathcal F_i^\ell \otimes \mathcal M_i, \mathcal U_j^n \otimes \mathcal F_j^\ell \otimes \mathcal M_j} = 
c_{\mathcal U_i^n, \mathcal U_j^n} \otimes c_{\mathcal F_i^\ell, \mathcal F_j^\ell}  \otimes  c_{\mathcal M_i, \mathcal M_j} = \text{Id}_{\mathcal U_{i+j}^n \otimes \mathcal F_{i+j}^\ell \otimes \mathcal M_{i+j}}.
\]  
 All braidings are trivial, i.e. $\mathcal W$ is even.
  
Vertex algebra extensions in a vertex tensor category are in one-to-one correspondence with commutative algebra objects in the category \cite{HKL}. 
The category of modules of the extended VOA then coincides as a braided tensor category with the category of local modules of this commutative algebra \cite{CKM1}.
In our case $\mathcal W$ is only an object of the direct limit completion of $\mathcal C$, which we denote by $\overline{\mathcal C}$. This is still a vertex tensor category, extensions can still be identified with commutative algebras as well as the category of modules of the extended VOA is identified with the category of local modules of the commutative algebra \cite{CMY-limit}. 
The case of simple current extensions has been studied exhaustively and our problem is quite similar to the ones of  \cite{ACKR, CR, CMY2}, which we will now recall. 
Introduce the short-hand notation $J_i := \mathcal U_i^n \otimes \mathcal F_i^\ell \otimes \mathcal M_i$ and $\mathcal C_W$ for the category of modules of the VOA $\mathcal W$. 
Let $\overline{\mathcal D}$ be the subcategory of $\overline{\mathcal C}$ whose indecomposable objects $M$ satisfy 
\[
h_{J_1 \times M} - h_{M}  \in \mathbb Z  
\]
where $h_{J_1 \times M}$ and $h_{M}$ denote the conformal weights of the top levels of $J_1 \times M$ and $M$. 
There is then an induction functor $\Ind$ from $\overline{\mathcal C}$ to not necessarily local modules of $\mathcal W$. 
For an object $M$  in $\overline{\mathcal C}$ we have
\[
\Ind(M) \cong_{\overline{\mathcal C}} \mathcal W \times M = \bigoplus_{i \in \mathbb Z} J_i \times M, 
\]
i.e. $\Ind(M)$ is  $\mathcal W \times M$  together with a certain action of $\mathcal W$ that is not relevant for this work. 

A necessary condition for the induction of a module $M$ to be local, i.e. to be in $\mathcal C_W$, is that $M$ is in  $\overline{\mathcal D}$. 
If $J_i \times M \cong M$ for a simple module $M$ if and only if $i=0$, which is clearly true in our case, then 
for simple modules this is also a sufficient condition 
and one gets all simple modules in this way.
\begin{proposition}\cite[Prop. 3.2]{CMY2}
An object $N$ of $\mathcal C_W$ is simple if and only if there exists a simple module $M$ in $\overline{\mathcal D}$ with $\Ind(M) \cong N$. 

Morover, for simple modules $M, M'$ in $\overline{\mathcal D}$, $\Ind(M) \cong \Ind(M')$ if and only if there exists $i \in \mathbb Z$ with $M' \cong J_i \times M$. 
\end{proposition}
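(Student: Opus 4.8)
The plan is to run the by-now-standard argument for simple current extensions, in the form developed in \cite{ACKR, CR, CMY2}, but carried out inside the direct limit completion $\overline{\mathcal C}$ of $\mathcal C=\mathcal C_{\mathcal U}^n\boxtimes\mathcal C^\ell_{\mathcal F}\boxtimes\mathcal O_{\mathcal M}$, where $\mathcal W=\bigoplus_{i\in\mathbb Z}J_i$ is a commutative algebra object (it is even by the braiding computation above) and where, by \cite{CMY-limit} building on \cite{CKM1}, the category $\mathcal C_W$ is identified as a braided tensor category with the category $\mathrm{Rep}^0(\mathcal W)$ of local $\mathcal W$-modules. The main tools are the induction functor $\Ind\colon\overline{\mathcal C}\to\mathrm{Rep}(\mathcal W)$, which is monoidal and left adjoint to restriction $\mathrm{Res}$, so that Frobenius reciprocity $\mathrm{Hom}_{\mathcal W}(\Ind(M),N)\cong\mathrm{Hom}_{\overline{\mathcal C}}(M,\mathrm{Res}(N))$ holds, together with the description $\Ind(M)\cong_{\overline{\mathcal C}}\bigoplus_{i\in\mathbb Z}J_i\times M$ already recorded. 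Since $\mathcal C$ is of finite length, every nonzero object of $\overline{\mathcal C}$ contains a simple subobject.

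First I would prove that $\Ind(M)$ is simple for every simple $M$ in $\overline{\mathcal D}$. As each $J_i$ is invertible, the summands $J_i\times M$ are simple, and the hypothesis that $J_i\times M\cong M$ only for $i=0$ gives $J_i\times M\cong J_j\times M\iff i=j$; hence $\mathrm{Res}\,\Ind(M)=\bigoplus_i J_i\times M$ is a multiplicity-free sum of pairwise non-isomorphic simples, so any nonzero $\mathcal W$-submodule of $\Ind(M)$ restricts to $\bigoplus_{i\in S}J_i\times M$ for some nonempty $S\subseteq\mathbb Z$. The structure maps $J_k\times(J_i\times M)\xrightarrow{\ \sim\ }J_{k+i}\times M$ induced by the multiplication of $\mathcal W$ are isomorphisms --- this is exactly the simple current property of the extension --- so $S$ is stable under all shifts $i\mapsto k+i$, whence $S=\mathbb Z$ and the submodule is all of $\Ind(M)$. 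Conversely, given a simple $N\in\mathcal C_W$, the object $\mathrm{Res}(N)$ of $\overline{\mathcal C}$ decomposes into objects of $\overline{\mathcal D}$ by the general theory of \cite{CKM1, CMY-limit} and so contains a simple subobject $M\in\overline{\mathcal D}$; Frobenius reciprocity then gives $\mathrm{Hom}_{\mathcal W}(\Ind(M),N)\cong\mathrm{Hom}_{\overline{\mathcal C}}(M,\mathrm{Res}(N))\neq 0$, and since $\Ind(M)$ and $N$ are both simple we conclude $\Ind(M)\cong N$.

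For the second statement, the \emph{if} direction is immediate from monoidality of $\Ind$: one has $\Ind(J_i)\cong\mathcal W$ as a $\mathcal W$-module (because $\mathcal W\times J_i=\bigoplus_k J_{k+i}\cong\mathcal W$), hence $\Ind(J_i\times M)\cong\Ind(J_i)\otimes_{\mathcal W}\Ind(M)\cong\mathcal W\otimes_{\mathcal W}\Ind(M)\cong\Ind(M)$. For the \emph{only if} direction, an isomorphism $\Ind(M)\cong\Ind(M')$ forces $\bigoplus_i J_i\times M\cong\mathrm{Res}\,\Ind(M)\cong\mathrm{Res}\,\Ind(M')\cong\bigoplus_i J_i\times M'$ in $\overline{\mathcal C}$; comparing simple summands, and using that they are pairwise non-isomorphic, the simple module $M'$ must be isomorphic to $J_i\times M$ for some $i\in\mathbb Z$.

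The step I expect to be the main obstacle is purely foundational: making sure that the entire machinery --- the braided tensor structure, the algebra object $\mathcal W$, the induction functor, Frobenius reciprocity, and the identification $\mathcal C_W\simeq\mathrm{Rep}^0(\mathcal W)$ --- is genuinely available in the direct limit completion $\overline{\mathcal C}$ rather than only in $\mathcal C$, and that simple local $\mathcal W$-modules restrict to objects of $\overline{\mathcal C}$ possessing a simple subobject in $\overline{\mathcal D}$. This is precisely what \cite{CMY-limit} is designed to supply, so in practice the proof reduces to invoking \cite[Prop.~3.2]{CMY2} after checking the two finite hypotheses --- that $\mathcal C$ is of finite length and that $J_i\times M\cong M$ occurs only for $i=0$ --- both of which are clear in our situation.
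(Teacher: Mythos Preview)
Your proposal is correct and is precisely the standard simple current extension argument from \cite{ACKR, CR, CMY2, CMY-limit, CKM1}. Note, however, that the paper does not actually prove this proposition at all: it is stated with the citation \cite[Prop.~3.2]{CMY2} and used as a black box, so there is no ``paper's own proof'' to compare against beyond the invocation of that reference; your sketch simply unpacks what that citation contains.
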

 The condition $M$ in    $\overline{\mathcal D}$ is equivalent to 
 \[
  \bigoplus_{i \in \mathbb Z} J_i \times M
 \]
  being half-integer graded with $h_{J_i \times M} = h_{M} \mod \mathbb Z$ if $i$ is even and 
$h_{J_i \times  M} = h_{M} + \frac{1}{2} \mod \mathbb Z$ if $i$ is odd.

 \section{Minimal $\mathcal W$-algebras and conformal embeddings}\label{Wetal}

\subsection{The minimal $\mathcal W$-algebra of $\mathfrak{sl}_{m+2}$} 
Let $\g = \mathfrak{sl}_{m+2}$, $-\theta$ the minimal root, $W_k(\g, \theta)$ the associated  minimal affine $W$-algebra (cf. \cite{KW}, \cite{Araduke},  \cite{AKMPP-JA}).
 
The minimal $\mathcal W$-algebra has an affine vertex algebra  $\widetilde{L}_{k+1}(\mathfrak{gl}_m )  =\widetilde{L}_{k+1}(\mathfrak{sl}_m ) \otimes \mathcal H$ as subalgebra, where $\widetilde{L}_{k+1}(\mathfrak{sl}_m )$ is a certain quotient of the universal affine vertex algebra  $V^{k+1}(\mathfrak{sl}_m )$.
(Note that in general we don't know if  $\widetilde{L}_{k+1}(\mathfrak{sl}_m )$ is simple).
 
Let $\omega_1, \dots, \omega_{m+1}$ be the fundamental weights for $\g$. Let $\bar \omega_1, \dots, \bar  \omega_{m-1}$ be the fundamental weights  for $\mathfrak{sl}_m$.  One shows that
 $\langle \omega_ i,  \omega_j  \rangle = \text{min}\{ i, j\} - \frac{ij}{m+2}$.

For the dominant integral weight
$\lambda = \lambda_1 \omega_1 + \cdots+ \lambda_{m+1} \omega_{m+1}$, $\lambda_i \in {\Z}_{\ge 0}$, for $\g$, we define the dominant integral weight for $\mathfrak{sl_m}$ with
$\bar \lambda = \lambda_2 \bar \omega_1 + \cdots + \lambda_{m} \bar \omega_{m-1}$.

The Heisenberg field $J$  is obtained from 
$ J \equiv  \frac{1}{m+2} t(-1){\bf 1} $, where  $$ t =  \mbox{diag} ( m,  \underbrace{-2, \dots, -2}_{m}, m ) \in \g. $$Then
$J(1) J =  -\frac{m}{m+2} {\bf 1}$(cf. \cite[(3.20)]{CL1}).
Note that $t$ acts on highest weight vector $V_{\g}(\lambda)$ as $ (m+2)  \langle \lambda,  \omega_1 - \omega_{m+1} \rangle  \mbox{Id}$.


 There is a  Quantum Hamiltonian reduction  functor $H_{\theta}$ from $\KL_k(\g)$ 
to a certain  subcategory  of $W_{k}(\g, \theta)$--modules 
 whose properties
will be very important in the following. Details on the properties of this functor can be found in \cite{Araduke} and \cite{KW}, see also \cite{AKMPP-20}. We have
\begin{theorem}\label{thm:min}\cite{Araduke}
$H_\theta$ is an exact functor from the category $\KL_k(\g)$ to the category of ordinary modules of $W_{k}(\g, \theta)$.
\end{theorem}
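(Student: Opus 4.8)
The statement is due to Arakawa \cite{Araduke}, so the plan is to recall the relevant construction and to isolate the single nontrivial input. First I would set up the BRST complex for the minimal reduction (following \cite{KW}): fix an $\mathfrak{sl}_2$-triple $(e_\theta,x,f_\theta)$ with $f_\theta$ in the root space of the minimal root, giving the good grading $\g=\g_{-1}\oplus\g_{-1/2}\oplus\g_0\oplus\g_{1/2}\oplus\g_1$ with $\g_{\pm1}$ one-dimensional, and let $\chi=\langle f_\theta,\cdot\rangle$ be the induced character of $\g_{>0}=\g_{1/2}\oplus\g_1$. For $M\in\KL_k(\g)$ one forms $C(M)=M\otimes F^{\mathrm{ch}}\otimes F^{\mathrm{ne}}$, where $F^{\mathrm{ch}}$ is the charged ($bc$) ghost vertex superalgebra attached to $\g_{>0}$ and $F^{\mathrm{ne}}$ the neutral free-field system attached to $\g_{1/2}$ with the symplectic form $\langle f_\theta,[\cdot,\cdot]\rangle$; this is graded by ghost charge and carries the odd differential $d=d^{(0)}+d^{(\chi)}$, with $d^{(0)}$ the semi-infinite cohomology differential of the loop algebra of $\g_{>0}$ and $d^{(\chi)}$ its twist by $\chi$. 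By definition $H^i_\theta(M)=H^i(C(M),d)$ and $H_\theta:=H^0_\theta$.

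The heart of the proof — and the step I expect to be the main obstacle — is the vanishing $H^i_\theta(M)=0$ for $i\neq 0$ when $M$ lies in category $\mathcal O$, in particular for $M\in\KL_k(\g)$. Here I would follow Arakawa: equip $C(M)$ with a filtration (by a suitable combination of ghost charge and conformal weight) whose associated graded differential is the $\chi$-part alone. Because $\chi([\cdot,\cdot])$ is nondegenerate on $\g_{1/2}$ and $\chi$ is nonzero on $\g_1$, the charged ghosts for $\g_{1/2}$ pair with the neutral fields and the $\g_1$-ghost pairs with the constraint, so $(C(M),\mathrm{gr}\,d)$ is a tensor product of Koszul-type complexes that is acyclic off ghost charge $0$. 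Hence the first page of the spectral sequence is concentrated in charge $0$; since $M$, and therefore $C(M)$, carries a conformal grading bounded below with finite-dimensional graded pieces, the spectral sequence converges and the full cohomology is concentrated in degree $0$. In the minimal case this is especially transparent because $\g_{\pm1}$ is one-dimensional; for the present paper one simply cites \cite{Araduke} for it.

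Once this vanishing is in hand, exactness is formal: a short exact sequence $0\to A\to B\to C\to 0$ in $\KL_k(\g)$ yields a short exact sequence of complexes $0\to C(A)\to C(B)\to C(C)\to 0$ — tensoring with $F^{\mathrm{ch}}\otimes F^{\mathrm{ne}}$ is exact — and the long exact sequence in cohomology collapses, by concentration in degree $0$, to $0\to H_\theta(A)\to H_\theta(B)\to H_\theta(C)\to 0$. It remains to check that $H_\theta(M)$ is an \emph{ordinary} $W_k(\g,\theta)$-module; for this I would invoke the Euler--Poincar\'e principle: vanishing of higher cohomology identifies the graded character of $H_\theta(M)$ with the supercharacter of $C(M)$, i.e. the character of $M$ times explicit ghost characters, while the $W$-algebra Hamiltonian is the shift of the $\g$-conformal grading by $x_{(0)}$. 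Since $M$ is ordinary — finite-dimensional $L_0$-eigenspaces, spectrum bounded below, weights in finitely many cosets of the root lattice — and the two ghost systems are themselves conformally bounded below with finite-dimensional graded pieces, the same holds for $H_\theta(M)$. Thus $H_\theta$ maps $\KL_k(\g)$ to ordinary $W_k(\g,\theta)$-modules, and is exact.
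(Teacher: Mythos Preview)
Your proposal is correct in spirit, and in fact goes well beyond what the paper does: the paper offers no argument whatsoever for this theorem and simply cites \cite{Araduke} (see the sentence immediately preceding the statement, ``Details on the properties of this functor can be found in \cite{Araduke} and \cite{KW}''). So there is nothing to compare against --- this is a quoted result, not a proven one.

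That said, your sketch of Arakawa's argument is accurate in outline: the BRST complex setup, the filtration/spectral sequence argument yielding vanishing of $H^i_\theta$ for $i\neq 0$, and the collapse of the long exact sequence are indeed the key steps. One small remark: your description of the associated graded differential as ``the $\chi$-part alone'' is a simplification --- in Arakawa's actual argument the degeneration is more delicate and uses a comparison with Verma-type modules and the structure of the $\g_0$-action --- but as a high-level roadmap for why exactness holds, what you wrote is a fair summary. For the purposes of this paper, a one-line citation suffices.
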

and 
\begin{proposition}\label{prop:min} \cite{Araduke, KW} Assume that  $k \notin {\Z}_{\ge 0}$,  $\lambda$ is dominant integral weight for  $\mathfrak{sl}_{m+2}$ such that $\tilde L_k(\lambda)$  a  highest weight module in $\KL_k(\mathfrak{sl}_{m+2})$. If $\tilde L_k(\lambda)$  is simple then so is $H_{\theta} (\tilde L_k(\lambda))$.
Moreover  $H_{\theta} (\tilde L_k(\lambda))$ is a highest weight module for $W_k(\mathfrak{sl}_{m+2}, \theta)$ with a highest weight vector $\overline {v}_{\lambda}$ such that the highest weight with respect to $\widetilde{L}_{k+1}(\mathfrak{sl}_m ) \otimes \mathcal H$ (i.e., the $\mathfrak{gl}_m$ highest weight) is $(  \langle \lambda, \omega_1 - \omega_{m+1} \rangle , \bar \lambda)$. In particular we have:
 \begin{itemize}
 \item $J(0) \overline {v}_{\lambda} =     \langle \lambda, \omega_1 - \omega_{m+1} \rangle   \overline {v}_{\lambda}$. 
 \item  $H_{\theta} (\tilde L_k(\lambda))_{top}$ contains a  $\mathfrak{sl}_{m}$--submodule which is isomorphic to the irreducible, highest weight module  $V_{\mathfrak{sl}_m} (\bar \lambda) = \mathfrak{sl}_{m} . \overline {v}_{\lambda}$  with highest weight $\bar \lambda$.
 \item  $L(0)   \overline {v}_{\lambda} = \left( \frac{ \langle \lambda, \lambda + 2 \rho\rangle}{ 2( k + h^{\vee})} -  \tfrac{1}{2}  \langle  \lambda, \theta \rangle \right)    \overline {v}_{\lambda}. $  
   \end{itemize}

\end{proposition}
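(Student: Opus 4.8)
The plan is to derive all the assertions from two inputs: Arakawa's structural results on the minimal quantum Hamiltonian reduction functor $H_\theta$ (exactness on $\KL_k(\g)$ being Theorem \ref{thm:min}) and the explicit Kac--Wakimoto description of the strong generators of $W_k(\g,\theta)$ inside the BRST complex. Recall that $H_\theta=H^0_{DS,f_\theta}$ is the zeroth cohomology of the complex assembled from $V^k(\g)$, a neutral free-fermion vertex algebra attached to $\g_{1/2}$ and a charged $bc$-ghost system attached to $\g_{\ge 1/2}$, with respect to the minimal good $\tfrac12\Z$-grading $\g=\g_{-1}\oplus\g_{-1/2}\oplus\g_0\oplus\g_{1/2}\oplus\g_1$ determined by $\theta$; here $\g_0=\g^\natural\oplus\C x$ with $\g^\natural=\mathfrak{gl}_m$ and $x$ the grading element, normalized so that $\langle x,\lambda\rangle=\tfrac12\langle\lambda,\theta\rangle$ on weights.

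First I would record the general facts from \cite{Araduke, KW}: $H_\theta$ carries the category $\mathcal O$ (resp.\ $\KL_k$) of $\hg$-modules into category $\mathcal O$ of $W_k(\g,\theta)$, it sends a highest weight module to a highest weight module whose highest weight vector $\overline v_\lambda$ is the cohomology class of $v_\lambda$ tensored with the ghost/fermion vacuum, and it sends a simple object of $\mathcal O$ to a simple object or to $0$. Since $\tilde L_k(\lambda)$ is a quotient of the (generalized) Verma module, exactness identifies $H_\theta(\tilde L_k(\lambda))$ with a highest weight quotient of $H_\theta$ of that Verma, so $\overline v_\lambda$ is its highest weight vector; and $\overline v_\lambda\ne 0$ because $\lambda$ is dominant integral (the non-vanishing criterion of \cite{Araduke, KW} for minimal reduction). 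Combined with the assumed simplicity of $\tilde L_k(\lambda)$, this already gives that $H_\theta(\tilde L_k(\lambda))$ is a simple highest weight $W_k(\g,\theta)$-module; it remains to compute the action of $J(0)$, of $\g^\natural=\mathfrak{gl}_m$ and of $L(0)$ on $\overline v_\lambda$.

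All three computations rest on the same observation: in the Kac--Wakimoto presentation every strong generator of $W_k(\g,\theta)$ coming from $\g^\natural\oplus\C x$, as well as the conformal vector, equals the corresponding affine, resp.\ Sugawara, object plus correction terms that are normally ordered products in the $\g_{\pm1/2}$-sector and the ghosts; each such correction term annihilates $\overline v_\lambda$, being built from annihilation modes of that sector and from the ghost/fermion vacuum. Hence: (i) $J(0)\overline v_\lambda=\tfrac1{m+2}\,t(0)\overline v_\lambda=\langle\lambda,\omega_1-\omega_{m+1}\rangle\overline v_\lambda$, using that $t$ acts on $v_\lambda$ by $(m+2)\langle\lambda,\omega_1-\omega_{m+1}\rangle$ as noted in Section \ref{Wetal}; (ii) the $\mathfrak{sl}_m\subset\g^\natural$ currents act on $\overline v_\lambda$ through their affine parts, so $\mathfrak{sl}_m.\overline v_\lambda$ is the image of the finite-dimensional module $\mathfrak{sl}_m.v_\lambda\cong V_{\mathfrak{sl}_m}(\bar\lambda)$, and this image is nonzero (its highest weight vector $\overline v_\lambda$ is), hence isomorphic to $V_{\mathfrak{sl}_m}(\bar\lambda)$; consequently the $\mathfrak{gl}_m$-highest weight of $\overline v_\lambda$ is $(\langle\lambda,\omega_1-\omega_{m+1}\rangle,\bar\lambda)$; (iii) writing $L^W=L^{\mathrm{sug}}_{\g}+\partial x+L^{\mathrm{gh}}$, on $\overline v_\lambda$ the Sugawara term contributes $\tfrac{\langle\lambda,\lambda+2\rho\rangle}{2(k+h^\vee)}$, the ghost term contributes $0$ on the ghost vacuum, and $\partial x$ contributes the shift $-\langle x,\lambda\rangle=-\tfrac12\langle\lambda,\theta\rangle$, giving $L(0)\overline v_\lambda=\bigl(\tfrac{\langle\lambda,\lambda+2\rho\rangle}{2(k+h^\vee)}-\tfrac12\langle\lambda,\theta\rangle\bigr)\overline v_\lambda$.

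The genuinely hard ingredients are the ones I would import rather than reprove: Arakawa's vanishing of $H^i_{DS,f_\theta}$ on $\KL_k$ for $i\ne0$ (equivalently, exactness --- Theorem \ref{thm:min}), the preservation of irreducibility under $H^0_{DS,f_\theta}$, and the non-vanishing criterion guaranteeing $\overline v_\lambda\ne0$; all of these are in \cite{Araduke, KW}. Granting them, the remaining work is bookkeeping with the Kac--Wakimoto generators together with the elementary identity $t=(m+2)(\omega_1-\omega_{m+1})$ under $\mathfrak h\cong\mathfrak h^*$, which follows at once from $\langle\omega_i,\omega_j\rangle=\min\{i,j\}-\tfrac{ij}{m+2}$.
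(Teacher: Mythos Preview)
The paper does not supply a proof of this proposition at all: it is stated as a citation of \cite{Araduke, KW}, and the surrounding text only records the definition of $J$ and the action of $t$ on highest weight vectors. Your sketch is a correct and faithful unpacking of what those references actually prove --- Arakawa's exactness and irreducibility-preservation for $H_\theta$ on $\KL_k$, together with the Kac--Wakimoto description of the strong generators (affine currents for $\g^\natural$, the Heisenberg $J$, and $L^W = L^{\mathrm{sug}} + \partial x + L^{\mathrm{gh}}$) and the fact that the ghost/fermion correction terms annihilate the highest weight class $\overline v_\lambda$. In particular your computation of the $L(0)$-shift via $\partial x$ and your use of the identity $t \leftrightarrow (m+2)(\omega_1-\omega_{m+1})$ (equivalently, the stated action of $t$ on $v_\lambda$) are exactly the intended arguments. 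There is nothing to compare against in the paper itself; your write-up is what a reader would produce upon consulting \cite{Araduke, KW}.
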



  Note that if the module is   the quantum Hamiltonian reduction of an object in $\KL_k(\g)$, its Heisenberg weight   needs to lie in $\frac{1}{m+2} \mathbb Z$.

\subsection{The category $\KL_k(\mathfrak{sl}_4)$}\label{KL4}
Let $m \in {\Z}_{>0}$, $m \ge 4$.
We introduce the following notation for the irreducible $L_{-\frac{m+ 1}{2} }(\mathfrak{sl}_m)$--modules:
$$U_i ^{(m/2)} =L_{-\frac{m+1}{2}}(  i \omega_1), \ U^{(m/2)} _{-i}=L_{-\frac{m+1}{2}}(  i \omega_{m-1}), \ i \in \mathbb{Z}_{\geq 0}. \vspace*{2mm}$$
 We know from \cite{APV} that:
 
\begin{itemize}
 \item  $\KL_k(\mathfrak{sl}_4)$ is a semisimple rigid braided tensor category for $k=-5/2$.
 \item The set  $\{ U_i ^{(2)} , i \in {\Z}\}$ gives all irreducible modules in   $\KL_k(sl(4))$.
 \item  The  modules $U_i ^{(2)} $ are simple current with the following fusion rules $$ U_i ^{(2)} \times U_j ^{(2)} = U_{i+j}^{(2)}, \quad i, j \in {\Z}. $$

\end{itemize}

\subsection{Conformal embedding of $L_{-  \frac{m+1}{2}}(\mathfrak{gl}_{m})$ in  $L_{ - \frac{m+1}{2}}(\mathfrak{sl}_{m+1})$  }
In our paper we shall need the following result on conformal embedding:

\begin{theorem} \label{dec-akmpp}  \cite{AKMPP-16}.The vertex algebra  $  L_{- \frac{m+1}{2}}(\mathfrak{gl}_m)=   L_{- \frac{m+1}{2}}(\mathfrak{sl}_m) \otimes \mathcal H$ is conformally embedded into the  simple affine vertex algebra $L_{-  \frac{m+1}{2}}(\mathfrak{sl}_{m+1})$ and we have the following decomposition:

\[
L_{- \frac{m+1}{2}}(\mathfrak{sl}_{m+1}) = \bigoplus_{i \in \mathbb Z} U_i^{(m/2)} \otimes \mathcal F_i^s
 \]
 with $s = - \frac{m}{2}$. In particular, $U_i^{(n)}$, $i \in {\Z}$, are simple $U_0^{(n)}$--modules.
 \end{theorem}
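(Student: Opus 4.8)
\emph{Proof proposal (reconstructing the argument of \cite{AKMPP-16}).} Throughout write $k=-\tfrac{m+1}{2}$. The plan is to establish, in turn: (i) conformality of the embedding; (ii) the coarse shape of the decomposition from the Heisenberg grading; (iii) exactness of the decomposition; and (iv) the final remark on $U_0^{(n)}$-modules.

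For (i) I would realise $\mathfrak{gl}_m=\mathfrak{sl}_m\oplus\mathbb{C}t\hookrightarrow\mathfrak{sl}_{m+1}$ as the block subalgebra stabilising $\mathbb{C}^{m+1}=\mathbb{C}^m\oplus\mathbb{C}$, with $t=\tfrac{1}{m+1}\mathrm{diag}(1,\dots,1,-m)$ normalised so that the off-diagonal generators $E_{a,m+1}$ ($1\le a\le m$) have $t$-charge $+1$. The Dynkin index of $\mathfrak{sl}_m\hookrightarrow\mathfrak{sl}_{m+1}$ is $1$, so the affine subalgebra sits at the same level $k$, while $J(1)J=k\langle t,t\rangle\mathbf{1}=-\tfrac{m}{2}\mathbf{1}$, i.e. the Heisenberg subfield is a copy of $\mathcal{H}$ with rescaling $\ell=s=-\tfrac{m}{2}$. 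Comparing Sugawara central charges, $c(\mathfrak{sl}_{m+1},k)=\tfrac{k((m+1)^2-1)}{k+m+1}=-m(m+2)$ agrees with $c(\mathfrak{sl}_m,k)+c(\mathcal{H})=-(m+1)^2+1=-m(m+2)$. Since $\mathfrak{gl}_m$ is an equal-rank reductive subalgebra, the conformal-embedding criterion (cf. \cite{KW,AKMPP-16}) applies and reduces to checking that the $\mathfrak{gl}_m$-conformal weight of each isotypic summand $\mathfrak{gl}_m,\ \mathbb{C}^m,\ (\mathbb{C}^m)^{\ast}$ of $\mathfrak{sl}_{m+1}$ equals $1$; for $\mathbb{C}^m=V_{\mathfrak{sl}_m}(\omega_1)$ this is the Casimir computation $\tfrac{\langle\omega_1,\omega_1+2\rho\rangle}{2(k+m)}+\tfrac{1}{2s}=\tfrac{m+1}{m}-\tfrac{1}{m}=1$, and symmetrically for $(\mathbb{C}^m)^{\ast}$. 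This yields the conformal embedding $L_k(\mathfrak{sl}_m)\otimes\mathcal{H}\hookrightarrow L_k(\mathfrak{sl}_{m+1})$; that the affine subalgebra is its \emph{simple} quotient follows from simplicity of the ambient together with conformality.

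For (ii): because $t$ acts on $\mathfrak{sl}_{m+1}$ with integer eigenvalues, the Heisenberg zero mode $J(0)$ is diagonalisable on $L_k(\mathfrak{sl}_{m+1})$ with integer spectrum, giving $L_k(\mathfrak{sl}_{m+1})=\bigoplus_{i\in\mathbb{Z}}V_i$ with each $V_i$ an ordinary $L_k(\mathfrak{sl}_m)\otimes\mathcal{H}$-module (the lower-bounded $L_0$-grading descends from the ambient by conformality). One has $V_0\supseteq L_k(\mathfrak{sl}_m)\otimes\mathcal{H}$. The $E_{a,m+1}$ commute pairwise in $\mathfrak{sl}_{m+1}$, so for $i\ge1$ the vectors $E_{a_1,m+1}(-1)\cdots E_{a_i,m+1}(-1)\mathbf{1}$ span a copy of $S^i(\mathbb{C}^m)=V_{\mathfrak{sl}_m}(i\omega_1)$ of $J(0)$-charge $i$ sitting at conformal weight $i$; since $\Delta(U_i^{(m/2)})+\Delta(\mathcal{F}_i^{s})=\big(\tfrac{i^2}{m}+i\big)+\big(-\tfrac{i^2}{m}\big)=i$, this is exactly the lowest conformal weight space of $V_i$, and symmetrically for $i\le-1$ with the $E_{m+1,a}$ (yielding $V_{\mathfrak{sl}_m}(|i|\omega_{m-1})$). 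Hence $V_i\supseteq U_i^{(m/2)}\otimes\mathcal{F}_i^{s}$ for every $i$.

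For (iii), and the main obstacle: to upgrade $\supseteq$ to equality I would use simplicity of $L_k(\mathfrak{sl}_{m+1})$ — a proper $L_k(\mathfrak{sl}_m)\otimes\mathcal{H}$-submodule of some $V_i$ would, under repeated action of the charge-$(\mp1)$ currents, generate a proper nonzero ideal of $L_k(\mathfrak{sl}_{m+1})$, which is impossible; so each $V_i$ is a highest-weight module with highest weight $(i\omega_1;i)$ and, lying inside the simple $L_k(\mathfrak{sl}_{m+1})$, must be the simple module $U_i^{(m/2)}\otimes\mathcal{F}_i^{s}$ — in particular $U_i^{(m/2)}=L_k(i\omega_1)$ is the \emph{simple} affine $\mathfrak{sl}_m$-module, not a larger quotient of a generalised Verma module. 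This last point — ensuring no further vectors appear in $V_i$ at higher conformal weight, and that the $\mathfrak{sl}_m$-module generated inside the simple ambient is precisely the simple affine module — is the crux, and is where the most care is needed. For even $m$ it can be closed cleanly, since $L_k(\mathfrak{sl}_{m+1})$ is then admissible and the multiplicities can be read off its Kac--Wakimoto character; for general $m$ one relies on the structural analysis of conformal embeddings in minimal $W$-algebras of \cite{AKMPP-16}. Once the decomposition is in hand, (iv) is immediate: $U_0^{(m/2)}=L_k(\mathfrak{sl}_m)$ is the affine VOA itself, and each $U_i^{(m/2)}$ occurs in the above decomposition of the simple VOA $L_k(\mathfrak{sl}_{m+1})$ as a genuine simple module over it.
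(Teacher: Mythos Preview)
The paper does not prove this theorem at all: it is quoted verbatim as a result of \cite{AKMPP-16}, with no argument given. So there is no ``paper's own proof'' to compare against; your proposal is a reconstruction of the cited reference rather than of anything in this paper.

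That said, your outline is broadly in the spirit of \cite{AKMPP-16}: check the conformal-embedding criterion on the $\mathfrak{gl}_m$-isotypic pieces of $\mathfrak{sl}_{m+1}$, then decompose by Heisenberg charge. Two places deserve more care. First, your claim in (i) that ``the affine subalgebra is its \emph{simple} quotient follows from simplicity of the ambient together with conformality'' is not a general fact; a conformal subalgebra of a simple VOA need not itself be simple. In \cite{AKMPP-16} this is handled case-by-case (and is in fact closely tied to the decomposition itself). Second, your argument in (iii) that a proper $L_k(\mathfrak{sl}_m)\otimes\mathcal H$-submodule of $V_i$ would generate a proper ideal of $L_k(\mathfrak{sl}_{m+1})$ is not airtight as stated: acting by the charge-$\mp 1$ currents on a proper $W\subset V_i$ could well produce all of $V_j$ for $j\neq i$ while $W$ remains proper in $V_i$, so no contradiction arises. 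The cleaner route (and the one underlying the result) is the Heisenberg Schur--Weyl duality of \cite{CKLR}: since $J(0)$ has integer spectrum on the simple VOA $L_k(\mathfrak{sl}_{m+1})$, one gets $L_k(\mathfrak{sl}_{m+1})=\bigoplus_i C_i\otimes\mathcal F_i^s$ with each $C_i$ automatically \emph{simple} over the commutant $C_0=\mathrm{Com}(\mathcal H,L_k(\mathfrak{sl}_{m+1}))$. What then remains is to identify $C_0$ with $L_k(\mathfrak{sl}_m)$ (not a larger extension) and $C_i$ with $U_i^{(m/2)}$; your top-level analysis in (ii) together with the conformal embedding is what pins this down. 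You correctly flag this as the crux, and your admissibility/character argument for even $m$ is a legitimate way to close it.
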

 
 Note that lowest conformal weight of  $\mathcal F_i^s$ is $-\frac{i^2}{m}$, so the conformal weight  of top component of  $U_i^{(n)} \otimes \mathcal F_i^s$ is $\vert i \vert $.

A special case of the main result of \cite{M} is 
\begin{theorem}\label{thm:robert}
Assume that $\KL_{- \frac{m+1}{2}}(\mathfrak{sl}_m)$ is a vertex tensor category. Then the $U_i^{(n)}$, $i \in {\Z}$, are simple currents, in particular rigid, and they form a tensor category that is braid reversed equivalent to the category of the Fock modules  $\mathcal F_i^s, i \in \Z$. 
\end{theorem}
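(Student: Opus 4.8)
The plan is to recognise the statement as an instance of the orbifold / mirror‑extension machinery of \cite{M}: I would assemble the data of a faithfully graded commutative algebra in a suitable Deligne product whose ``Heisenberg side'' is already known to be rigid, and then quote \cite{M} to transport rigidity, simple–currentness and the braid‑reversal to the ``$\mathfrak{sl}_m$ side''. Write $n=m/2$ and $s=-\tfrac m2$, and let $\mathcal C^s_{\mathbb Z}\subseteq\mathcal C^s_{\mathcal F}$ be the full subcategory on the Fock modules $\mathcal F^s_i$, $i\in\mathbb Z$. By Section~\ref{Heisenbeg-voa} this is a braided tensor category which is \emph{pointed and rigid} (each $\mathcal F^s_i$ is invertible with inverse $\mathcal F^s_{-i}$), with fusion group $\mathbb Z$ and scalar braiding $c_{\mathcal F^s_i,\mathcal F^s_j}=e^{\pi i\, ij/s}\,\mathrm{Id}$.

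First I would fix the ambient category. By hypothesis $\KL:=\KL_{-\frac{m+1}{2}}(\mathfrak{sl}_m)$ is a vertex tensor category; each $U^{(n)}_i$ lies in it, being ordinary with finite‑dimensional top space $V_{\mathfrak{sl}_m}(i\omega_1)$ (resp.\ $V_{\mathfrak{sl}_m}(|i|\omega_{m-1})$), and the $U^{(n)}_i$ are pairwise non‑isomorphic and simple by Theorem~\ref{dec-akmpp}. The Deligne product $\mathcal C:=\KL\boxtimes\mathcal C^s_{\mathcal F}$ is then a vertex tensor category of $L_{-\frac{m+1}{2}}(\mathfrak{sl}_m)\otimes\mathcal H$‑modules by \cite[Theorem 5.5]{CKM2}, and passing to its direct limit completion $\overline{\mathcal C}$ \cite{CMY-limit} provides room for the infinite direct sum below.

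Next I would invoke Theorem~\ref{dec-akmpp}, which exhibits $V:=L_{-\frac{m+1}{2}}(\mathfrak{sl}_{m+1})=\bigoplus_{i\in\mathbb Z}U^{(n)}_i\otimes\mathcal F^s_i$ as a simple vertex operator algebra extending $L_{-\frac{m+1}{2}}(\mathfrak{sl}_m)\otimes\mathcal H$. Since the top conformal weight of the $i$‑th summand is $|i|\in\mathbb Z_{\ge0}$, the extension is even, so $V$ is a commutative, haploid algebra object in $\overline{\mathcal C}$ \cite{HKL, CMY-limit}, $\mathbb Z$‑graded and faithfully graded (all summands nonzero); moreover the decomposition shows $\mathrm{Com}(\mathcal H,V)=L_{-\frac{m+1}{2}}(\mathfrak{sl}_m)$, since only the $i=0$ summand carries $\mathfrak{sl}_m$‑invariant vectors, so that $\bigl(L_{-\frac{m+1}{2}}(\mathfrak{sl}_m),\mathcal H\bigr)$ is a dual commuting pair in $V$. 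Now the main result of \cite{M} applies: because the ``$\mathcal H$‑side'' summands $\mathcal F^s_i$ generate the rigid pointed category $\mathcal C^s_{\mathbb Z}$, the $U^{(n)}_i$ generate a rigid braided tensor subcategory of $\KL$, each $U^{(n)}_i$ is a simple current, and $U^{(n)}_i\mapsto\mathcal F^s_i$ extends to a braid‑reversed braided tensor equivalence onto $\mathcal C^s_{\mathbb Z}$; in particular $U^{(n)}_i\times U^{(n)}_j=U^{(n)}_{i+j}$ and $c_{U^{(n)}_i,U^{(n)}_j}=c_{\mathcal F^s_j,\mathcal F^s_i}^{-1}=e^{-\pi i\, ij/s}\,\mathrm{Id}=e^{2\pi i\, ij/m}\,\mathrm{Id}$, which is precisely what commutativity of $V$ demands, since it forces the monodromy $M_{U^{(n)}_i,U^{(n)}_j}$ to be inverse to the scalar $M_{\mathcal F^s_i,\mathcal F^s_j}$ on the image of the product map.

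The hard part is not any computation but checking that we genuinely land inside the hypotheses of \cite{M} in this non‑rational, non‑$C_2$‑cofinite situation: one must confirm the categorical finiteness and tensor‑structure conditions it requires — this is exactly what the hypothesis ``$\KL_{-\frac{m+1}{2}}(\mathfrak{sl}_m)$ is a vertex tensor category'' supplies, together with the unconditionally good behaviour of the Heisenberg/Fock categories recalled in Section~\ref{Heisenbeg-voa} — verify the dual‑pair condition $\mathrm{Com}(\mathrm{Com}(\mathcal H,V),V)=\mathcal H$, and, crucially, use the rigidity of the Fock side (which holds unconditionally) as the input that manufactures duals on the $\mathfrak{sl}_m$ side rather than assuming rigidity there. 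The remaining points — evenness of the extension, distinctness and ordinariness of the $U^{(n)}_i$, and the matching of conformal weights — are routine.
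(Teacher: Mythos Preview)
Your proposal is correct and follows exactly the paper's approach: the paper simply states this theorem as ``a special case of the main result of \cite{M}'' applied to the conformal embedding of Theorem~\ref{dec-akmpp}, and you have spelled out precisely how the hypotheses of \cite{M} are verified in this instance. Your additional details (the Deligne product setup via \cite{CKM2, CMY-limit}, the explicit braiding computation, and the dual-pair/rigidity checks) are consistent with what the paper leaves implicit.
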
 
 
 





  \section{Minimal $W$-algebra $W_{k-1}(\mathfrak{sl}_{2n+2}, \theta) $  and tensor category $\KL_k(\mathfrak{sl}_{2n})$ at the level  $k =-n-\frac{1}{2}$}
  \label{main-sect}
  In this section we have the following setting:
  \begin{itemize}
  \item  $n$ is a positive integer starting at $2$,  $m= 2n$;
  \item $k_{n} = -n - \frac{1}{2}= - \frac{m+1}{2}$.
  \item $\g_n = \mathfrak{sl}_{2n}$.
  \item $\theta^{(n)}$ is the highest root in $\g_n$.
  \end{itemize}

     \begin{theorem} \label{main} For each $n \in \mathbb Z_{\geq 2}$
 \begin{itemize}
 \item[(1)]   the set  $\{ U_{i} ^{(n)}  \vert \ i \in {\Z} \}$ provides a complete list of inequivalent  irreducible modules in $\KL_{k_n} (\g_n)$ with fusion rules
 $$ U_i ^{(n)} \times U_j ^{(n)} = U_{i+j}^{(n)}, \quad i, j \in {\Z}.$$
 \item[(2)]  $\KL_{k_n}(\g_n)$ is a semi simple rigid braided tensor category.
  \item[(3)] There exists conformal embedding $L_{k_n} (\g_n) \otimes \mathcal H \otimes \mathcal M(2) \hookrightarrow  W_{k_{n+1}}(\g_{n+1}, \theta^{(n+1)})$.
 \item[(4)] $W_{k_{n+1}}(\g_{n+1}, \theta^{(n+1)})$ is a simple current extension of $ L_{k_n} (\g_n) \otimes \mathcal H \otimes \mathcal M(2)$ and we have the following decomposition:
 $$W_{k_{n+1}}(\g_{n+1}, \theta^{(n+1)}) = \bigoplus_{i \in {\Z}}  U_{i} ^{(n)} \otimes \mathcal F^\ell_{i} \otimes \mathcal M_i$$
 where $\ell = - \frac{m}{m+2}$.
 \end{itemize}
 \end{theorem}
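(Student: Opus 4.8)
The plan is to prove Theorem \ref{main} by induction on $n$, the base case $n=2$ being the main theorem of \cite{APV}: there $\KL_{k_2}(\mathfrak{sl}_4)$ is a semisimple rigid braided tensor category whose simple objects are exactly the $U_i^{(2)}$, $i\in\Z$, and these are simple currents with $U_i^{(2)}\times U_j^{(2)}=U_{i+j}^{(2)}$, which is (1) and (2) for $n=2$. For the inductive step I would assume (1) and (2) for some $n\ge 2$ (write $m=2n$) and deduce (3), (4) for $n$ together with (1), (2) for $n+1$; running this from $n=2$ also produces (3), (4) for every $n\ge 2$.

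\textbf{Step 1 (the extension is the minimal $W$-algebra).} First I would note that, by the induction hypothesis together with Theorems \ref{dec-akmpp} and \ref{thm:robert} at $m=2n$, the category $\KL_{k_n}(\g_n)$ satisfies the axioms imposed on $\mathcal C^n_{\mathcal U}$ in Section \ref{auxVOA}, so that the construction of Section \ref{extVOA} applies with $\mathcal U^n=L_{k_n}(\g_n)$. The conformal-weight and braiding computations there give $\Delta(J_i)=\tfrac{3}{2}|i|$ and $c_{J_i,J_j}=\mathrm{Id}$ for $J_i:=U_i^{(n)}\otimes\mathcal F^\ell_i\otimes\mathcal M_i$, $\ell=-\tfrac{m}{m+2}$, so $\mathcal W:=\bigoplus_{i\in\Z}J_i$ is an even simple-current extension of $L_{k_n}(\g_n)\otimes\mathcal H\otimes\mathcal M$ realized as a commutative algebra object in the direct limit completion $\overline{\mathcal C}$ \cite{HKL,CKM1,CMY-limit}. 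By construction $\mathcal W$ is strongly generated by the weight-one fields of $L_{k_n}(\g_n)\otimes\mathcal H$ together with the weight-$\tfrac{3}{2}$ fields spanning $J_{\pm 1}$; the uniqueness theorem for minimal affine $W$-algebras \cite[Theorem 3.1]{ACKL} (or the hook-type uniqueness \cite[Lemma 6.3]{CL1}, which applies more directly) then identifies $\mathcal W\cong W_{k_{n+1}}(\g_{n+1},\theta^{(n+1)})$, proving (3) and (4).

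\textbf{Step 2 (ordinary $W$-modules and finite length).} Using the extension theory \cite{CKM1,CMY-limit} and \cite[Prop.~3.2]{CMY2} I would list the simple ordinary $\mathcal W$-modules as inductions $\Ind(U^{(n)}_i\otimes\mathcal F^\ell_a\otimes X)$ with $X$ an $\mathcal M$-module (atypical $\mathcal M_b$ or typical $\mathcal V_\nu$) and the triple constrained by the $\overline{\mathcal D}$-condition $h_{J_1\times M}-h_M\in\Z$ of Section \ref{extVOA}; the atypical family provides the candidates for $H_\theta$-images of highest-weight modules in $\KL_{k_{n+1}}(\g_{n+1})$. Combining exactness of $H_\theta$ (Theorem \ref{thm:min}) with the Heisenberg weight (which must lie in $\tfrac{1}{m+2}\Z$) and $L(0)$-eigenvalue data of Proposition \ref{prop:min}, I would match each $H_\theta$-image to such an induced module and bound the length of an arbitrary object of $\KL_{k_{n+1}}(\g_{n+1})$; this finiteness, via \cite{CY}, makes $\KL_{k_{n+1}}(\g_{n+1})$ a vertex tensor category.

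\textbf{Step 3 (simple objects and the main obstacle).} With the tensor structure in hand, Theorem \ref{thm:robert} for $\mathfrak{sl}_{2n+2}$ together with the conformal embedding $\mathfrak{gl}_{2n+2}\hookrightarrow\mathfrak{sl}_{2n+3}$ at level $k_{n+1}$ of \cite{AKMPP-16} and McRae's orbifold theory \cite{M} show that the $U_i^{(n+1)}$, $i\in\Z$, occurring in that embedding are simple currents with $U_i^{(n+1)}\times U_j^{(n+1)}=U_{i+j}^{(n+1)}$. To finish I must show these are all the simple objects: the necessary conditions of Proposition \ref{prop:min} leave only finitely many further candidate $H_\theta$-images, and I would rule each one out by assuming it is $H_\theta(L')$ for some $L'\in\KL_{k_{n+1}}(\g_{n+1})$, tensoring with the simple currents $U_i^{(n+1)}$ and/or inducing back to $\mathcal W$, and deriving a contradiction with the fusion rules just established. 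This yields (1) and (2) for $n+1$ and closes the induction. I expect this last exclusion step — reconciling the quantum Hamiltonian reduction data with the rigid fusion structure to kill the spurious modules — to be the main difficulty, with a secondary technical point being the careful bookkeeping of the $\overline{\mathcal D}$-condition in Step 2 and the minimal (rather than merely hook-type) $W$-algebra identification in Step 1.
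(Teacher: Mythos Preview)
Your proposal is correct and follows essentially the same route as the paper: induction with base case \cite{APV}, identification of the simple current extension with the minimal $W$-algebra via a uniqueness theorem (the paper uses \cite[Theorem 9.1]{CL1}), analysis of ordinary $\mathcal W$-modules to get finite length and hence a tensor category via \cite{CY}, McRae's theorem to obtain the simple currents $U_i^{(n+1)}$, and finally a fusion-rule contradiction to exclude the spurious candidates. Your identification of the exclusion step as the main difficulty is accurate; the paper handles it in two stages you have somewhat compressed: first a conformal-weight comparison (their Lemma~\ref{QHlowerbounded}) forces the lower bound of $\mathcal A_{a,b}$ to occur at $i=0$ (equivalently $\lambda_2=\lambda_m=0$) and forces $\nu\in\Z$ in the typical case, and only then does the fusion argument (their Lemma~\ref{nije+-1}) eliminate $b=\tfrac{m+2}{m}a\pm 1$.
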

 The Theorem  will be proven by induction for $n$. The case $n=2$, i.e. $m=4$ is known and recalled in section \ref{KL4}.
 
 We note that the induction hypothesis point (1) and (2) says that $ L_{k} (\g_n)$ is a realization of  the auxillary vertex algebra $\mathcal U^n$ of section \ref{auxVOA}.
We thus have the commutative algebra object discussed in section \ref{extVOA}
 \[
 A = \bigoplus_{i \in {\Z}}  U_{i} ^{(n)} \otimes \mathcal F^\ell_{i} \otimes \mathcal M_i.
 \]
 The conformal weight of the top level of $A_i = U_{i} ^{(n)} \otimes \mathcal F^\ell_{i} \otimes \mathcal M_i$ is
 \[
 \Delta(A_i) = \frac{i^2}{m} + |i| - i^2 \frac{m+2}{2m} + \frac{i^2 + |i|}{2} = \frac{3}{2} |i|.
 \]

 \begin{lemma}\label{hook identify}
 $A \cong W_{k_{n+1}}(\g_{n+1}, \theta^{(n+1)})$.
 \end{lemma}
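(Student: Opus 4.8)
The plan is to identify $A = \bigoplus_{i \in \mathbb Z} U_i^{(n)} \otimes \mathcal F^\ell_i \otimes \mathcal M_i$ with the simple minimal $W$-algebra $W_{k_{n+1}}(\g_{n+1}, \theta^{(n+1)})$ by applying a uniqueness theorem for vertex algebras that are strongly generated by their weight-one part together with a collection of weight-$3/2$ primary fields. First I would record the basic structural features of $A$ as a vertex algebra. By the computation just above, the summand $A_i$ has top conformal weight $\tfrac{3}{2}|i|$, so the only integer-weight summand contributing below weight one is $A_0 = U_0^{(n)} \otimes \mathcal H \otimes \mathcal M = L_{k_n}(\g_n) \otimes \mathcal H \otimes \mathcal M$; its weight-one space is exactly $\mathfrak{gl}_m = \mathfrak{sl}_m \oplus \mathbb C h$ (the singlet $\mathcal M$ has no weight-one states, being of type $W(2,3,\dots)$ with $c=-2$). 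The summands $A_{\pm 1}$ each contribute a space of conformal weight $3/2$ primaries, which as a $\mathfrak{gl}_m$-module is $(V_{\mathfrak{sl}_m}(\bar\omega_1) \text{ or } V_{\mathfrak{sl}_m}(\bar\omega_{m-1}))$ tensored with the one-dimensional top of $\mathcal F^\ell_{\pm 1}$ and of $\mathcal M_{\pm 1}$; no summand $A_i$ with $|i|\geq 2$ contributes in weights $\leq 3/2$. Hence $A$ is of "hook type": it is strongly generated by $\mathfrak{gl}_m$ in weight one together with $2m$ odd (or rather weight-$3/2$) generators transforming in $\bar\omega_1 \oplus \bar\omega_{m-1}$, plus possibly the weight-$3/2$ generator pairing with the Heisenberg, matching exactly the strong generating type of $W_{k-1}(\g_{n+1},\theta)$ as a minimal $W$-algebra (or of the hook-type $W$-algebra $\mathcal W^k(\mathfrak{sl}_{m+2}, (m+1,1))$).

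The next step is to invoke the relevant uniqueness theorem. I would use \cite[Lemma 6.3]{CL1}, the uniqueness theorem for hook-type $W$-algebras of type $A$, or alternatively \cite[Theorem 3.1]{ACKL}, the uniqueness theorem for minimal affine $W$-algebras: such a theorem says that a simple vertex algebra with the correct strong generating type, the correct central charge, the correct action of the weight-one affine subalgebra on the weight-$3/2$ generators, and the correct level is isomorphic to the minimal $W$-algebra. So I must check: (i) $A$ is simple — this follows from the general theory of simple current extensions, since $A$ is a simple current extension of the simple vertex algebra $L_{k_n}(\g_n)\otimes \mathcal H\otimes \mathcal M$ by a group of simple currents (one uses that each $A_i$ is a simple module and $A_i \ncong A_0$ for $i\neq 0$, so $A$ has no proper ideals); (ii) the central charge of $A$ equals $c(W_{k-1}(\mathfrak{sl}_{m+2},\theta))$ — add $c(L_{k_n}(\mathfrak{sl}_m))$ at level $k_n = -\tfrac{m+1}{2}$, plus $c(\mathcal H)=1$, plus $c(\mathcal M)=-2$, and compare with Kac–Wakimoto's formula for the minimal $W$-algebra central charge; (iii) the affine subalgebra of $A$ at weight one is $\widetilde L_{k_n+1}(\mathfrak{gl}_m)$ as it should be — here I note $k_n + 1 = -\tfrac{m+1}{2}+1 = -\tfrac{m-1}{2} = k_{n-1}$ wait, rather the minimal $W$-algebra $W_{k-1}(\mathfrak{sl}_{m+2},\theta)$ has affine subalgebra at level $(k-1)+1 = k = k_n$, matching $L_{k_n}(\mathfrak{sl}_m)$ (one must double-check the shift conventions), and the Heisenberg self-coupling $J(1)J = -\tfrac{m}{m+2}\mathbf 1$ matches the rescaling $\ell = -\tfrac{m}{m+2}$ of $\mathcal F^\ell$; (iv) the weight-$3/2$ generators transform correctly under $\mathfrak{gl}_m$ and have the correct $\lambda$-bracket with the affine currents, which is read off from the fusion $A_0 \times A_{\pm 1} = A_{\pm 1}$ together with the $\mathfrak{gl}_m$-module structure of the tops of $U^{(n)}_{\pm 1}$.

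The main obstacle, I expect, is \textbf{step (iii)–(iv)}: matching the precise quadratic relations — i.e. showing that the singular OPE of the weight-$3/2$ generators of $A$ (whose pole terms land in the weight-one affine part and in the vacuum, up to normal-ordered products) agrees with those of the minimal $W$-algebra. The uniqueness theorems are designed precisely to reduce this to a finite check, but that check still requires knowing the normalization of the weight-$3/2$ fields and the value of the "level" parameter that appears in the structure constants; here one uses the conformal weight bookkeeping ($\Delta(A_i) = \tfrac32|i|$ forces the self-OPE of the weight-$3/2$ generators to close on weight $\leq 2$ fields, hence on the affine currents and vacuum, leaving no room for a new strong generator of weight $2$) together with the known value of $k_n$ to pin down the structure constant, and then the uniqueness theorem of \cite{CL1} or \cite{ACKL} does the rest. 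A secondary subtlety is verifying simplicity of $A$ directly rather than merely that it is an extension; but since the extension is by an abelian group of simple currents with $A_i \ncong A_0$ for all $i \neq 0$, standard arguments (as in \cite{CKM1, CMY2}) give that $A$ is simple. Once all the numerical data line up, the uniqueness theorem yields $A \cong W_{k_{n+1}}(\g_{n+1},\theta^{(n+1)})$ as claimed.
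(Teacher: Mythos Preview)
Your overall strategy matches the paper's: establish the generating type of $A$ and then invoke a uniqueness theorem for hook-type (equivalently, minimal) $W$-algebras; the paper ultimately cites \cite[Theorem 9.1]{CL1}. However, the key assertion ``Hence $A$ is of `hook type': it is strongly generated by $\mathfrak{gl}_m$ in weight one together with $2m$ weight-$3/2$ generators\ldots'' is not justified by what precedes it. Knowing $\Delta(A_i)=\tfrac{3}{2}|i|$ and that only $A_0, A_{\pm 1}$ contribute in weight $\leq 3/2$ locates the \emph{lowest} candidate generators, but does not establish generation: you must still rule out that $A_{\pm 1}$ contains fields at higher conformal weight that are not reachable from its top level by the action of $A_0$. (You also omit the Virasoro field and the weight-$3$ singlet field of $\mathcal M$ from your generating set; these are strong generators of $A_0$ and must be accounted for before any uniqueness theorem can be applied.)

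The paper closes this gap via symplectic fermions. The extension $\bigoplus_{i\in\mathbb Z}\mathcal M_i$ is the symplectic fermion VOA, strongly and freely generated by odd weight-one fields $\chi^\pm$ with $\chi^+(z)\chi^-(w)=(z-w)^{-2}$, $\mathbb Z$-graded so that $\mathcal M_{\pm 1}$ is the degree-$\pm 1$ subspace. One checks directly that the weight-$1,2,3$ subspaces of $\mathcal M_{\pm 1}$ are one-dimensional, spanned by $\chi^\pm_{-r}|0\rangle$, i.e.\ by derivatives of $\chi^\pm$; hence every field in $\mathcal M_{\pm 1}$ is a normally ordered polynomial in the Virasoro and singlet fields of $\mathcal M$ (and their derivatives) times $\chi^\pm$. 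Combined with the fact (from \cite{CKM2}) that $A$ is generated as a VOA by $A_0, A_{\pm 1}$, this shows that the strong generators of $A_0$ together with the top levels of $A_{\pm 1}$ already generate all of $A$. After noting OPE closure (since $\Delta(A_{\pm 2})=3$), the uniqueness theorem of \cite{CL1} applies directly; the numerical checks (i)--(iv) you outline are subsumed in that theorem and need not be verified separately.
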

 \begin{proof}
We want to determine the generating type of $A$. As a vertex algebra $A$ is generated by $A_0,  A_1$ and $A_{-1}$ (this follows for example from Main Theorem 1 of \cite{CKM2}). 
$A_0$ is strongly generated by the weight one subspace of  $L_{k_n} (\g_n)$, the Heisenberg field $h$, the Virasoro field, call it $L$ of $\mathcal M$ and the singlet field, call it $W$, of $\mathcal M$ of conformal weight three. 

Let us describe these modules in terms of symplectic fermions, these are  $\bigoplus_{i \in \mathbb Z} \mathcal M_i$ as an extension of $\mathcal M$.
Symplectic fermions are strongly and freely generated by two odd fields $\chi^\pm$ of conformal weight one with OPE
\[
\chi^+(z) \chi^-(w) = \frac{1}{(z-w)^{2}}.
\]
The singlet field $W$ is just $\chi^+ \partial \chi^-$.
Symplectic fermions are $\mathbb Z$-graded by giving $\chi^\pm$ degree $\pm 1$. 
The subspace of degree $\pm 1$ is then precisely the singlet module $\mathcal M_{\pm 1}$. Denote by $| \, 0 \, \rangle$ the vacuum of $\mathcal M$. Then the top space of $\mathcal M_{\pm 1}$ corresponds to the vector $\chi^\pm_{-1}| \, 0 \, \rangle$. Similar the subspaces of conformal weight two and three of $\mathcal M_{\pm 1}$ are clearly one-dimensional spanned by $\chi^\pm_{-2}| \, 0 \, \rangle$, respectively $\chi^\pm_{-3}| \, 0 \, \rangle$. In particular these states correspond to derivatives of $\chi^\pm(z)$. This means that any field in $\mathcal M_{\pm 1}$ is a normally ordered polynomial of $L, W$ and their iterated derivatives times $\chi^\pm$.
From this consideration we learn that the set $\mathcal S$, consisting of the strong generators of $\mathcal W_0$ together with the fields corresponding to the top level of $\mathcal W_{\pm 1}$ already generate $\mathcal W$ as a VOA. Since the conformal weight of the top level of  $\mathcal W_{\pm 1}$ is $\frac{3}{2}$ and since the one of  $\mathcal W_{\pm 2 }$ is $3$ these must close under OPE.
  It follows immediately from uniqueness of hook-type $W$-algebras of type $A$ \cite[Theorem 9.1]{CL1} that $A \cong W_{k_{n+1}}(\g_{n+1}, \theta^{(n+1)})$.
  \end{proof}

 Thus claims (3) and (4) follow from claims (1) and (2). 
 
 Let us prove claims (1) and (2) by induction.
 The induction claim is now that $\KL_{k_n}(\g_n)$ is a semi simple rigid braided tensor category and a complete list on inequivalent simple objects of $L_{k_n}(\g_n)$ are the $U_i^{(n)}$ and they are simple currents and the categorical dimension of any of these are one.

   We are interested in the category of local $A$--modules that are ordinary. There are two series, which we call atypical and typical. 
 We start with the atypical case, these are obtained via induction from $U_{a} ^{(n)} \otimes \mathcal F^\ell_{b} \otimes \mathcal M_c$ and since
 \[
 Ind  (U_{a} ^{(n)} \otimes \mathcal F^\ell_{b} \otimes \mathcal M_c) \cong  Ind (U_{a+1} ^{(n)} \otimes \mathcal F^\ell_{b+1} \otimes \mathcal M_{c+1}) ,
 \]
 it is enough to consider
 \[
 \mathcal A_{a, b} :=
Ind (U_{0} ^{(n)} \otimes \mathcal F^\ell_{a} \otimes \mathcal M_b) = \bigoplus_{i \in {\Z}}  U_{i} ^{(n)} \otimes \mathcal F^\ell_{i+a} \otimes \mathcal M_{i+b}.
 \]
 This is a local module if and only if it is half-integer (and not integer) graded by conformal weight. This is easily checked to happen if and only if $a \frac{m+2}{m} \in \mathbb Z$. 
 The top component  of $U_{i} ^{(n)} \otimes \mathcal F^\ell_{i+a} \otimes \mathcal M_{i+b}$ has conformal weight
 \begin{equation}
 \begin{split}
 \Delta(a, b, i) &= \frac{i^2}{m} + |i| - (a+i)^2 \frac{m+2}{2m} + \frac{(b+i)^2}{2} + \frac{|b+i|}{2}  \\
 &= |i| +  \frac{|b+i|}{2}  +bi - ai \frac{m+2}{m}  +\frac{b^2}{2} - a^2\frac{m+2}{2m} \\
 &= |i| +  \frac{|b+i|}{2}  + (b- a \frac{m+2}{m} ) i   +\frac{b^2}{2} - a^2\frac{m+2}{2m}
 \end{split}
 \end{equation}
 From this we see  the following lemma:
 
 \begin{lemma} \label{lower-bound} 
 	Assume that the induced module $\mathcal A_{a,b}$ is local. Then $\mathcal A_{a,b}$  is lowerbounded if and only if $| b - a\frac{m+2}{m}| \leq \frac{3}{2}$. 
   We have the following cases:

 \begin{itemize}
 \item  $b - a\frac{m+2}{m} =0$. Then  $\Delta(a, b, i) =  |i| +  \frac{|b+i|}{2} +\frac{b^2}{2} - a^2\frac{m+2}{2m}$. The  lowerbound is attained for $i=0$.
 \item  $b - a\frac{m+2}{m} =1$. Then  $\Delta(a, b, i) =  |i| + i + \frac{|b+i|}{2} +\frac{b^2}{2} - a^2\frac{m+2}{2m}$. The  lowerbound is attained for $i + b=0$ if $b \ge  0$ and for $i =0$ if $b < 0$.
  \item  $b - a\frac{m+2}{m} =-1$. Then  $\Delta(a, b, i) =  |i| - i + \frac{|b+i|}{2} +\frac{b^2}{2} - a^2\frac{m+2}{2m}$. The  lowerbound is attained for $i + b=0$ if $b <  0$ and for $i =0$ if $b \ge  0$.
 
  \end{itemize}
  \end{lemma}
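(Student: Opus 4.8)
The plan is to read off the condition for lowerboundedness directly from the three expressions for $\Delta(a,b,i)$ that were already computed, so the work is essentially a finite case analysis in the integer variable $i$. First I would observe that $\mathcal A_{a,b} = \bigoplus_{i\in\mathbb Z} U_i^{(n)}\otimes \mathcal F^\ell_{i+a}\otimes \mathcal M_{i+b}$ is lowerbounded precisely when the function $i\mapsto \Delta(a,b,i)$ attains a minimum over $\mathbb Z$, equivalently when it does not tend to $-\infty$ as $i\to\pm\infty$. From the last displayed form $\Delta(a,b,i) = |i| + \tfrac{|b+i|}{2} + (b-a\tfrac{m+2}{m})i + C$ with $C = \tfrac{b^2}{2}-a^2\tfrac{m+2}{2m}$ independent of $i$, the asymptotics are governed by the piecewise-linear part: for $i\to +\infty$ the slope is $1 + \tfrac12 + (b-a\tfrac{m+2}{m}) = \tfrac32 + t$ where $t := b-a\tfrac{m+2}{m}$, and for $i\to -\infty$ the slope of $\Delta$ in $i$ is $-1 - \tfrac12 + t = -\tfrac32 + t$ (so as a function of $|i|$ in that tail the slope is $\tfrac32 - t$). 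Hence $\Delta(a,b,i)$ stays bounded below on both tails if and only if $\tfrac32 + t \ge 0$ and $\tfrac32 - t \ge 0$, i.e. $|t| = |b - a\tfrac{m+2}{m}| \le \tfrac32$, which is exactly the claimed inequality. (Since $\Delta$ is piecewise linear with only two breakpoints at $i=0$ and $i=-b$, boundedness on the two tails forces an actual minimum.)

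Next I would pin down where the minimum is attained in each of the three integrality-relevant cases $t\in\{-1,0,1\}$ — these are the only values of $t$ compatible with $t = b - a\tfrac{m+2}{m}$ once one imposes that $\mathcal A_{a,b}$ is local, since locality already forced $a\tfrac{m+2}{m}\in\mathbb Z$ and hence $t\in\mathbb Z$, while $|t|\le \tfrac32$ leaves only $-1,0,1$. For $t=0$, $\Delta(a,b,i) = |i| + \tfrac{|b+i|}{2} + C$ is a sum of two convex piecewise-linear functions both minimized at $i=0$ (note $|b+i|$ at $i=0$ gives $|b|$, and moving $i$ away from $0$ toward $-b$ decreases $|b+i|$ by at most $\tfrac12$ per unit while $|i|$ increases by $1$ per unit, so $i=0$ wins), giving the stated lowerbound at $i=0$. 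For $t=1$, $\Delta = |i| + i + \tfrac{|b+i|}{2} + C$; here $|i|+i$ is $0$ for $i\le 0$ and $2i$ for $i\ge 0$, so one only needs to compare values at $i\le 0$, where $\Delta = \tfrac{|b+i|}{2} + C$ is minimized at $i = -b$ when $-b\le 0$ (i.e. $b\ge 0$) and at the boundary $i=0$ when $-b > 0$ (i.e. $b<0$) — exactly the dichotomy in the statement. The case $t=-1$ is the mirror image under $i\mapsto -i$, $b\mapsto -b$, so it follows by the same computation.

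The only genuinely delicate point — and the one I would be most careful about — is the interaction between the \emph{strict} lowerboundedness being asserted (an actual minimum exists) and the boundary cases $t = \pm\tfrac32$ of the weak inequality: but those are vacuous here because locality forces $t\in\mathbb Z$, so the inequality $|t|\le\tfrac32$ is equivalent to $|t|\le 1$ and there is no subtlety at the endpoints. A secondary point to check is that, in the $t=\pm1$ cases, when the minimum is attained at $i=-b$ the corresponding summand $U^{(n)}_{-b}\otimes\mathcal F^\ell_{a-b}\otimes\mathcal M_0$ (or its mirror) is genuinely nonzero, which it is since all $U^{(n)}_j$, $\mathcal F^\ell_\alpha$, $\mathcal M_j$ are nonzero simple modules. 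I expect no real obstacle; the lemma is a bookkeeping consequence of the conformal-weight formula already derived, and the proof is a half-page case check organized around the sign of $b - a\tfrac{m+2}{m}$ and the sign of $b$.
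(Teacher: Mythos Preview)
Your proposal is correct and follows exactly the approach the paper takes: the paper presents the lemma as an immediate consequence of the displayed formula $\Delta(a,b,i) = |i| + \tfrac{|b+i|}{2} + (b - a\tfrac{m+2}{m})i + \tfrac{b^2}{2} - a^2\tfrac{m+2}{2m}$ with no further argument, and your piecewise-linear slope analysis simply fills in the details of that deduction. One small wording slip: in the $t=0$ case you say the two pieces are ``both minimized at $i=0$'', which is not literally true of $\tfrac{|b+i|}{2}$; your parenthetical argument, however, correctly shows that the \emph{sum} is minimized at $i=0$, so the logic is sound.
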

 So we conclude that the  lowerbound is attained either for $i=0$ or for $i+b=0$.

  We turn to the typical case, these are obtained via induction from $U_{a} ^{(n)} \otimes \mathcal F^\ell_{\mu} \otimes \mathcal V_\nu$ and since
 \[
 Ind  (U_{a} ^{(n)} \otimes \mathcal F^\ell_{\mu} \otimes \mathcal V_\nu ) \cong  Ind (U_{a+1} ^{(n)} \otimes \mathcal F^\ell_{\mu+1} \otimes \mathcal V_{\nu+1}) ,
 \]
 it is enough to consider
 \[
 \mathcal T_{\mu, \nu} :=
Ind (U_{0} ^{(n)} \otimes \mathcal F^\ell_{\mu} \otimes \mathcal V_\nu) = \bigoplus_{i \in {\Z}}  U_{i} ^{(n)} \otimes \mathcal F^\ell_{i+\mu} \otimes \mathcal V_{i+\nu}.
 \]
 This is a local module if and only if it is half-integer (and not integer) graded by conformal weight. This is easily checked to happen if and only if $\nu = \frac{m+2}{m} \mu  \mod \mathbb Z$. 
 The top level of $U_{i} ^{(n)} \otimes \mathcal F^\ell_{\mu+i} \otimes \mathcal V_{\nu+i}$ has conformal weight

   \begin{equation} \label{tip-low}
 \begin{split}
 \Delta(\mu, \nu, i) &=  -\frac{\mu^2(m+2)}{2m} +\frac{\nu(\nu+1)}{2} + |i| + i \left( \nu - \frac{m+2}{m} \mu   +\frac{1}{2} \right).
 \end{split}
  \end{equation}
 From this we see:
 
 \begin{lemma}   \label{lowerbound-tipical} 
  Assume that the induced module $\mathcal T_{\mu, \nu}$ is local. Then $\mathcal T_{\mu, \nu}$  is lowerbounded if and only if  $\nu = \frac{m+2}{m} \mu$ or $\nu = \frac{m+2}{m} \mu -1 $. The  lowerbound is attained for $i=0$.
   
\end{lemma}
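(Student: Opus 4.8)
The plan is to reduce the claim to an elementary piecewise-linear analysis of the single function $i\mapsto\Delta(\mu,\nu,i)$ on $\mathbb{Z}$. First I would observe that, by construction, $\mathcal T_{\mu,\nu}=\bigoplus_{i\in\mathbb{Z}}U_i^{(n)}\otimes\mathcal F^\ell_{\mu+i}\otimes\mathcal V_{\nu+i}$ is a direct sum of ordinary modules (the $U_i^{(n)}$ are ordinary by the standing assumptions of Section \ref{auxVOA}, the Fock modules $\mathcal F^\ell_{\mu+i}$ and the $\mathcal V_{\nu+i}$ are lowerbounded), so each summand has conformal weights contained in $\Delta(\mu,\nu,i)+\mathbb{Z}_{\geq 0}$. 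Hence $\mathcal T_{\mu,\nu}$ is lowerbounded if and only if the set $\{\Delta(\mu,\nu,i):i\in\mathbb{Z}\}$ is bounded below.

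Next I would set $c:=\nu-\frac{m+2}{m}\mu+\tfrac12$, so that \eqref{tip-low} reads $\Delta(\mu,\nu,i)=\Delta(\mu,\nu,0)+|i|+ci$. Writing $g(i):=|i|+ci$ one has $g(i)=(1+c)i$ for $i\geq 0$ and $g(-j)=(1-c)j$ for $j\geq 0$, so $g$ is bounded below on $\mathbb{Z}$ if and only if $1+c\geq 0$ and $1-c\geq 0$, i.e. $|c|\leq 1$, which is exactly $-\tfrac32\leq\nu-\frac{m+2}{m}\mu\leq\tfrac12$. I would then invoke the locality hypothesis: $\mathcal T_{\mu,\nu}$ local forces $\nu\equiv\frac{m+2}{m}\mu\pmod{\mathbb{Z}}$, so $\nu-\frac{m+2}{m}\mu\in\mathbb{Z}$, and intersecting with the interval $[-\tfrac32,\tfrac12]$ leaves only the values $0$ and $-1$, i.e. $\nu=\frac{m+2}{m}\mu$ or $\nu=\frac{m+2}{m}\mu-1$. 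Conversely, for these two values $c\in\{\tfrac12,-\tfrac12\}$, so $|c|<1$; then $(1+c)i\geq 0$ on $i\geq 0$ and $(1-c)j\geq 0$ on $i=-j\leq 0$, both vanishing at $i=0$, so $g$ attains its minimum $0$ at $i=0$ and $\Delta(\mu,\nu,i)\geq\Delta(\mu,\nu,0)$ for all $i$. This gives both the equivalence and the last sentence of the lemma.

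I do not expect a genuine obstacle here: the argument is a direct estimate on a piecewise-linear function combined with the integrality of $\nu-\frac{m+2}{m}\mu$ coming from locality. The only points that need care are to state precisely that lowerboundedness of the direct sum is equivalent to boundedness below of the top weights $\Delta(\mu,\nu,i)$, and to note that the boundary cases $|c|=1$ — which would give a bounded-below but non-strictly-minimized-at-$0$ function — are in any case excluded by the locality hypothesis, so they do not enter the final statement.
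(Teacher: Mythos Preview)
Your proof is correct and follows essentially the same approach as the paper: both analyze the linear-in-$i$ part of $\Delta(\mu,\nu,i)$ to get the condition $\lvert\nu-\tfrac{m+2}{m}\mu+\tfrac12\rvert\leq 1$, then intersect with the integrality coming from locality to isolate the two values $\nu-\tfrac{m+2}{m}\mu\in\{0,-1\}$, and finally check that the minimum occurs at $i=0$. Your write-up is slightly more explicit about the piecewise-linear analysis and about why boundedness of the top-level weights is equivalent to lowerboundedness of the direct sum, but there is no substantive difference in strategy.
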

\begin{proof}
From (\ref{tip-low}) we see that this module is lowerbounded if and only if  $\vert \nu - \frac{m+2}{m} \mu   +\frac{1}{2} \vert \le 1$. Since  $\nu = \frac{m+2}{m} \mu  \mod \mathbb Z$, we have only two possibilities
  \begin{itemize}
 \item  $\nu  - \mu \frac{m+2}{m} =0$. Then  $\Delta(\mu, \nu , i) =  |i| + \frac{i}{2}   -\frac{\mu^2(m+2)}{2m}  +\frac{\nu(\nu+1)}{2}$. The  lowerbound is attained for $i=0$.
 \item  $\nu  - \mu \frac{m+2}{m} =-1$. Then  $\Delta(\mu , \nu , i) =  |i|  - \frac{i}{2}     -\frac{\mu^2(m+2)}{2m}  +\frac{\nu(\nu+1)}{2}$.
  The  lowerbound is attained  for $i =0$.
  \end{itemize}
The proof follows.
\end{proof} 
 
 Next, we wonder if $\mathcal A_{a, b}$ and $ \mathcal T_{\mu, \nu}$ are  the quantum Hamiltonian reductions of  ordinary modules of $L_{k_{n+1}}(\g_{n+1})$.

 \begin{lemma}\label{QHlowerbounded} Let $L_{k_{n+1}}(\lambda)$ be an  ordinary module of $L_{k_{n+1}}(\g_{n+1})$.
 If  $\mathcal A_{a, b}$ is the quantum Hamiltonian reduction of $L_{k_{n+1}}(\lambda)$,
 then
 the lower bound of an induced module is attained for $i=0$. In particular, we have that $\lambda_2 =0 = \lambda_{m}$.
 
 If  $\mathcal T_{\mu, \nu}$ is the quantum Hamiltonian reduction of an ordinary module of $L_{k_{n+1}}(\g_{n+1})$, then $\nu \in \mathbb Z$.
 \end{lemma}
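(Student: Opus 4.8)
The plan is to read the highest-weight data of $H_\theta(L_{k_{n+1}}(\lambda))$ off Proposition \ref{prop:min}, match it with the explicit top levels of the induced modules $\mathcal A_{a,b}$ and $\mathcal T_{\mu,\nu}$ from Lemmata \ref{lower-bound} and \ref{lowerbound-tipical}, and exclude the unwanted cases by comparing two formulas for the conformal weight of the top level.

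I would first record the common setup. If $L_{k_{n+1}}(\lambda)$ is ordinary then it is simple, so by Theorem \ref{thm:min} and Proposition \ref{prop:min} the module $\mathcal N:=H_\theta(L_{k_{n+1}}(\lambda))$ is a simple highest-weight module for $W_{k_{n+1}}(\g_{n+1},\theta^{(n+1)})\cong A$ (Lemma \ref{hook identify}), whose top level, as a $\mathfrak{gl}_m=\mathfrak{sl}_m\oplus\mathcal H$--module, has $\mathfrak{sl}_m$--highest weight $\bar\lambda$, Heisenberg weight $\langle\lambda,\omega_1-\omega_{m+1}\rangle$, and conformal weight $\tfrac{\langle\lambda,\lambda+2\rho\rangle}{m+1}-\tfrac12\langle\lambda,\theta\rangle$, where $2(k_{n+1}+h^\vee)=m+1$. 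In the first part of the lemma $\mathcal N\cong\mathcal A_{a,b}$, in the second $\mathcal N\cong\mathcal T_{\mu,\nu}$; since $\mathcal N$ is ordinary it is lowerbounded, so by Lemmata \ref{lower-bound} and \ref{lowerbound-tipical} its top level is the unique summand of least conformal weight, say the $i_0$--summand, with $i_0=0$ in the typical case and $i_0\in\{0,-b\}$ in the atypical case. Stripping off the one-dimensional $\mathcal H$-- and $\mathcal M$--factors, the top level of $\mathcal N$ is the top level of $U^{(n)}_{i_0}$, so $\bar\lambda$ is the highest weight of $U^{(n)}_{i_0}|_{\mathrm{top}}$, and the conformal weight of the top level equals $\Delta(a,b,i_0)$, respectively $\Delta(\mu,\nu,0)$.

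In the typical case this finishes the argument at once: $i_0=0$, and $U^{(n)}_0|_{\mathrm{top}}$ is the trivial $\mathfrak{sl}_m$--module, so $\bar\lambda=0$ and $\lambda=\lambda_1\omega_1+\lambda_{m+1}\omega_{m+1}$; then $\langle\lambda,\omega_1-\omega_{m+1}\rangle=\tfrac{m}{m+2}(\lambda_1-\lambda_{m+1})$ must equal $\mu$, hence $\tfrac{m+2}{m}\mu=\lambda_1-\lambda_{m+1}\in\mathbb Z$, and the locality condition $\nu\equiv\tfrac{m+2}{m}\mu\pmod{\mathbb Z}$ forces $\nu\in\mathbb Z$. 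In the atypical case, if $i_0=0$ the same remark gives $\bar\lambda=0$, i.e.\ $\lambda_2=\dots=\lambda_m=0$, so it remains to rule out $i_0=-b\neq0$. Using the diagram automorphism of $\mathfrak{sl}_{m+2}$ I may assume $b>0$; then Lemma \ref{lower-bound} forces $b-a\tfrac{m+2}{m}=1$, so $a\tfrac{m+2}{m}=b-1$, and $U^{(n)}_{-b}|_{\mathrm{top}}$ has highest weight $b\bar\omega_{m-1}$, so $\lambda=\lambda_1\omega_1+b\omega_m+\lambda_{m+1}\omega_{m+1}$. On one hand $\Delta(a,b,-b)=\tfrac{b^2}{2}-a^2\tfrac{m+2}{2m}=\tfrac{2b^2+2mb-m}{2(m+2)}$; on the other hand $\langle\lambda,\theta\rangle=\lambda_1+b+\lambda_{m+1}$ and, evaluating $\langle\lambda,\lambda+2\rho\rangle$ by means of $\langle\omega_i,\omega_j\rangle=\min\{i,j\}-\tfrac{ij}{m+2}$, one gets the value $\tfrac{\langle\lambda,\lambda+2\rho\rangle}{m+1}-\tfrac12\langle\lambda,\theta\rangle$. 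Equating these two values and clearing denominators should produce an identity of the shape
\[
2(m+1)(\lambda_1^2+\lambda_{m+1}^2)+4\lambda_1\lambda_{m+1}+8\lambda_1 b+4m\lambda_{m+1}b+(m+1)(m+2)(\lambda_1+\lambda_{m+1})+2(m-1)b^2+b(m^2+3m-2)+m^2+m=0,
\]
which is impossible, because for $m\ge4$, $b\ge1$ and $\lambda_1,\lambda_{m+1}\ge0$ every summand on the left is $\ge0$ and the last one, $m^2+m$, is strictly positive. Hence $i_0=0$, so $\bar\lambda=0$ and in particular $\lambda_2=\lambda_m=0$.

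The only substantial step is the last display: one has to evaluate $\langle\lambda,\lambda+2\rho\rangle$ explicitly for $\lambda=\lambda_1\omega_1+b\omega_m+\lambda_{m+1}\omega_{m+1}$, substitute it into $\Delta(a,b,-b)=\tfrac{\langle\lambda,\lambda+2\rho\rangle}{m+1}-\tfrac12\langle\lambda,\theta\rangle$, clear denominators, and check that in the resulting polynomial identity no monomial in $\lambda_1,\lambda_{m+1},b$ carries a negative coefficient when $m\ge4$ and that the constant term is positive. Everything else---identifying the top levels, the bookkeeping of the Heisenberg weights, and the entire typical case---follows routinely from Proposition \ref{prop:min} and Lemmata \ref{lower-bound}--\ref{lowerbound-tipical}.
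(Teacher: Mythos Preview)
Your proposal is correct and follows the same overall strategy as the paper: match the $\mathfrak{gl}_m$--highest weight of the top level and then rule out the case $i_0=-b\neq0$ by comparing the two formulas for the conformal weight of the top level. The typical case is identical to the paper's.

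There is one genuine difference worth recording. In the atypical case the paper works with $b<0$, writes $\lambda=\lambda_1\omega_1+\lambda_2\omega_2+\lambda_{m+1}\omega_{m+1}$ (with $\lambda_2=-b$), and expresses $\Delta(a,b,-b)$ via the \emph{Heisenberg} constraint $a-b=\langle\lambda,\omega_1-\omega_{m+1}\rangle$, so that the difference $\Delta_\theta-\Delta(a,b,-b)$ is a polynomial in $\lambda_1,\lambda_2,\lambda_{m+1}$ that contains the negative cross terms $-\lambda_1\lambda_{m+1}\tfrac{m-1}{m+1}$ and $-\lambda_2\lambda_{m+1}\tfrac{m-3}{m+1}$; this forces a completing-the-square argument. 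You instead use the \emph{locality} constraint $a\tfrac{m+2}{m}=b-1$ from Lemma~\ref{lower-bound} to write $\Delta(a,b,-b)=\tfrac{2b^2+2mb-m}{2(m+2)}$ as a function of $b$ alone and do not impose the Heisenberg constraint at all. Your resulting polynomial (which I have checked) then has every coefficient nonnegative for $m\ge 4$ and strictly positive constant term $m^2+m$, so positivity is immediate without any square-completion. This is a small but pleasant simplification: you prove a stronger inequality (for \emph{all} $\lambda_1,\lambda_{m+1}\ge 0$, not just those satisfying the Heisenberg constraint), and the argument becomes purely a sign check.
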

 Since the $\mathcal T_{\mu, \nu}$ for $\nu \in \mathbb Z$ are not simple but satisfy \eqref{eq:ses}, we get using Theorem \ref{thm:min} and Proposition \ref{prop:min} the following corollary:
 \begin{corollary}${}$ \label{cor:Aab}
 \begin{itemize}
 \item[(1)]
 Let $H_{\theta^{(n+1)}} (L_{k_{n+1}}(\lambda))$ be the quantum Hamiltonian reduction of a simple module in $\KL_{k_{n+1}}(\g_{n+1})$. Then there exists $a, b$  as above with $H_{\theta^{(n+1)}} (L_{k_{n+1}}(\lambda)) \cong \mathcal A_{a, b}$.
 \item[(2)] Let $H_{\theta^{(n+1)}} (\tilde L_{k_{n+1}}(\lambda))$ be the quantum Hamiltonian reduction of a module in $\KL_{k_{n+1}}(\g_{n+1})$. Then any simple composition factor of $H_{\theta^{(n+1)}} (L_{k_{n+1}}(\lambda))$ is isomorphic to some $\mathcal A_{a, b}$.
 \end{itemize}
 \end{corollary}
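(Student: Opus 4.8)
The plan is to read off what the reduction $H_{\theta^{(n+1)}}(L_{k_{n+1}}(\lambda))$ can be from the classification of the simple ordinary modules of the minimal $W$-algebra $A := W_{k_{n+1}}(\g_{n+1},\theta^{(n+1)})$, and then to discard the ``typical'' family using Lemma~\ref{QHlowerbounded} together with the non-split exact sequence \eqref{eq:ses}. For the classification: by the induction hypothesis (parts (1) and (2) of Theorem~\ref{main}) the VOA $L_{k_n}(\g_n)$ realizes the auxiliary VOA $\mathcal U^n$, and by Lemma~\ref{hook identify} $A$ is the simple current extension $\bigoplus_{i\in\Z}J_i$ of $L_{k_n}(\g_n)\otimes\mathcal H\otimes\mathcal M$. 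Any ordinary $A$-module restricts to an object of the direct limit completion $\overline{\mathcal C}$ of the Deligne product $\mathcal C_{\mathcal U}^n\boxtimes\mathcal C^\ell_{\mathcal F}\boxtimes\mathcal O_{\mathcal M}$ (it is an ordinary module separately over each of $L_{k_n}(\g_n)$, $\mathcal H$ and $\mathcal M$), so by \cite[Prop.~3.2]{CMY2} every simple ordinary $A$-module is $\Ind(M)$ for a simple object $M\in\overline{\mathcal D}$, unique up to tensoring with a $J_i$. Normalizing the $\mathcal U^n$-label to $0$, $M$ is either atypical, $U_0^{(n)}\otimes\mathcal F^\ell_a\otimes\mathcal M_b$, or typical, $U_0^{(n)}\otimes\mathcal F^\ell_\mu\otimes\mathcal V_\nu$ with $\nu\notin\Z$, the locality and ordinariness conditions being exactly those extracted in Lemmata~\ref{lower-bound} and \ref{lowerbound-tipical}. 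Thus the simple ordinary $A$-modules are precisely the lower-bounded local $\mathcal A_{a,b}$ together with the lower-bounded local $\mathcal T_{\mu,\nu}$ with $\nu\notin\Z$ (for $\nu\in\Z$ the module $\mathcal T_{\mu,\nu}$ is reducible, by \eqref{eq:ses}).

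For part (1), let $L_{k_{n+1}}(\lambda)$ be a simple object of $\KL_{k_{n+1}}(\g_{n+1})$; it is a highest weight module, so by Proposition~\ref{prop:min} the reduction $N := H_{\theta^{(n+1)}}(L_{k_{n+1}}(\lambda))$ is a nonzero simple highest weight $A$-module, and by Theorem~\ref{thm:min} it is ordinary. By the classification above, $N$ is a lower-bounded local $\mathcal A_{a,b}$ or a lower-bounded local $\mathcal T_{\mu,\nu}$ with $\nu\notin\Z$; but the second alternative is excluded, since the second statement of Lemma~\ref{QHlowerbounded} would force $\nu\in\Z$, whereas a $\mathcal T_{\mu,\nu}$ with $\nu\in\Z$ is reducible by \eqref{eq:ses}, contradicting the simplicity of $N$. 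Hence $N\cong\mathcal A_{a,b}$ for some $a,b$ as in Lemma~\ref{lower-bound}, which is part (1). For part (2), the functor $H_{\theta^{(n+1)}}$ is exact (Theorem~\ref{thm:min}), the category $\KL_{k_{n+1}}(\g_{n+1})$ is closed under subquotients, and its simple objects are the $L_{k_{n+1}}(\mu)$; therefore any simple composition factor of $H_{\theta^{(n+1)}}(\tilde L_{k_{n+1}}(\lambda))$ is, by exactness, a composition factor of some $H_{\theta^{(n+1)}}(L_{k_{n+1}}(\mu))$, which by part (1) is either $0$ or a simple module isomorphic to some $\mathcal A_{a,b}$. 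This gives part (2).

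The step I expect to need the most care is part (2). At this point in the induction $\tilde L_{k_{n+1}}(\lambda)$ is not yet known to have finite length (that is essentially the content of Lemma~\ref{KLcat}, proved afterwards), so ``composition factor'' must be handled with some care: one argues conformal weight by conformal weight, using that $H_{\theta^{(n+1)}}(\tilde L_{k_{n+1}}(\lambda))$ is an ordinary $A$-module with finite-dimensional, lower-bounded conformal weight spaces, so that any fixed simple composition factor already occurs inside a finite-length conformal-weight truncation, which lies in the reduction of a finitely generated submodule of $\tilde L_{k_{n+1}}(\lambda)$, and there part (1) applies. A secondary point underpinning the whole argument is to check that restriction to $L_{k_n}(\g_n)\otimes\mathcal H\otimes\mathcal M$ really sends ordinary $A$-modules into $\overline{\mathcal C}$, using semisimplicity of $\KL_{k_n}(\g_n)$ from the induction hypothesis and the fact that ordinary $\mathcal M$-modules lie in $\mathcal O_{\mathcal M}$; this is what makes \cite[Prop.~3.2]{CMY2} account for \emph{all} simple ordinary $A$-modules, which is the input the elimination argument of Steps~(1) and (2) rests on.
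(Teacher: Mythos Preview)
Your argument is correct and is exactly the one the paper has in mind: the one-line justification preceding the corollary (``Since the $\mathcal T_{\mu,\nu}$ for $\nu\in\Z$ are not simple but satisfy \eqref{eq:ses}, we get using Theorem~\ref{thm:min} and Proposition~\ref{prop:min} \ldots'') unpacks to precisely your Step~(1)--(2) elimination of the typical family via Lemma~\ref{QHlowerbounded}. Your added care about part~(2) (finite length not yet known) and about why restriction of ordinary $A$-modules lands in $\overline{\mathcal C}$ are points the paper leaves implicit; they are genuine technicalities but do not change the approach.
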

 We prove the Lemma \ref{QHlowerbounded}.
\begin{proof}
Assume that the lower bound is attained for $i$ such that $i+ b =0$ and $b \ne 0$. So $\lambda_2 \ne 0$ if $b<0$ and $\lambda_m \neq 0$ if $b>0$.

 We consider the case $b<0$, the case $b>0$ is similar. 
 We have  $\lambda_2 = i = -b$ and $\langle \lambda , \omega_1 - \omega_{m+1} \rangle = a-b$.
 
 The minimal reduction of $L_{k_{n+1}}(\lambda)$ has exactly the correct top level as $gl(m)$--module. We have to compare conformal weights. 
 The conformal weight of the minimal reduction of $L_{k_{n+1}}(\lambda)$ is 
 \[
 \frac{\langle \lambda, \lambda + 2\rho\rangle }{m+1} - \langle \lambda, \frac{\theta}{2}\rangle ,
 \]
 and the top level of our induced module has conformal weight
 \[
 \frac{\lambda_2^2}{m} + \lambda_2  - \langle \lambda , \omega_1 - \omega_{m+1} \rangle^2 \frac{m+2}{2m}.
  \]
 These need to coincide, so their difference needs to vanish. Using that $\omega_ i \omega_j = \text{min}\{ i, j\} - \frac{ij}{m+2}$ we compute:

\begin{align*}
\langle \lambda,  2 \rho \rangle  &= \lambda_1 (m+1) + 2 \lambda_2 m + \lambda_{m+1} (m+1),\\
\langle  \lambda, \lambda \rangle  &= \lambda_1^2 \frac{m+1}{m+2} + \lambda_2^2 \frac{2m}{m+2} + \lambda_{m+1}^2 \frac{m+1}{m+2} + \\& + \lambda_1 \lambda_2 \frac{2m}{m+2} + \lambda_1 \lambda_{m+1} \frac{2}{m+2} + \lambda_2 \lambda_{m+1} \frac{4}{m+2},\\
 \frac{\langle \lambda, \lambda + 2\rho\rangle }{m+1} &=   \frac{\langle \lambda, \lambda \rangle }{m+1} + \lambda_1 + 2\lambda_2 \frac{m}{m+1} + \lambda_{m+1},\\
 \langle  \lambda,   \omega_1 - \omega_{m+1}\rangle   & =  \lambda_1 \frac{m}{m+2} + \lambda_2 \frac{m-2}{m+2} - \lambda_{m+1} \frac{m}{m+2}, \\
  \langle \lambda ,\omega_1 - \omega_{m+1}\rangle ^2 \frac{m+2}{2m} &= \lambda_1^2 \frac{m}{2(m+2)} + \lambda_2^2 \frac{(m-2)^2}{2m(m+2)} + \lambda_{m+1}^2 \frac{m}{2(m+2)} +\\& + \lambda_1 \lambda_2 \frac{m-2}{m+2} -  \lambda_1 \lambda_{m+1} \frac{m}{m+2} -  \lambda_2 \lambda_{m+1} \frac{m-2}{m+2}.
\end{align*}
 Now we have:
 \begin{equation*}
\begin{split}
& \frac{\langle \lambda, \lambda + 2\rho\rangle }{m+1} -\langle  \lambda,  \frac{\theta}{2} \rangle - \frac{\lambda_2^2}{m} - \lambda_2  + \langle \lambda , \omega_1 - \omega_{m+1} \rangle^2 \frac{m+2}{2m} =\\
= &  \frac{\lambda_1^2}{2} + \lambda_2^2 \frac{m-3}{2(m+1)} +  \frac{\lambda_{m+1}^2}{2} + \lambda_1 \lambda_2 \frac{m-1}{m+1} - \lambda_1 \lambda_{m+1}  \frac{m-1}{m+1}  - \lambda_2 \lambda_{m+1}  \frac{m-3}{m+1} +\\& + \frac{\lambda_1}{2}  + \lambda_2 \frac{m-3}{2(m+1)} + \frac{\lambda_{m+1}}{2} \\
= & \frac{1}{2} \left( \lambda_1 + \lambda_2 \frac{m-3}{m-1} - \lambda_{m+1} \frac{m-1}{m+1}\right)^2 + \lambda_2^2 \frac{2(m-3)}{(m+1)(m-1)^2} + \lambda_{m+1}^2 \frac{2m}{(m+1)^2} + \\& 
+ \lambda_1 \lambda_2 \frac{4}{m^2 - 1} + \frac{\lambda_1}{2}  + \lambda_2 \frac{m-3}{2(m+1)} + \frac{\lambda_{m+1}}{2} \\ >& 0.
\end{split}
\end{equation*}
Therefore, the equality of conformal weights is not possible. This proves that $i=0$, and therefore, $\lambda_2 =0$.

We turn to the typical case. The lower-bound is attained at $i=0$ and $\nu = \mu \frac{(m+2)}{m} + N$ for $N \in \{ 0, -1\}$. Consider the quantum Hamiltonian reduction of $L_{k_{n+1}}(\lambda)$ and assume that its top level coincides with the one of $\mathcal T_{\mu, \nu}$. 
  Since 
\bea   \mu = \langle \lambda, \omega_1 - \omega_{m+1}\rangle &=& \langle \lambda_1 \omega_1 + \lambda_{m+1} \omega_{m+1}, \omega_1 - \omega_{m+1}\rangle \nonumber \\
&=&  \lambda_1 \left(  (1- \tfrac{1}{m+2}) - 1 +   \tfrac{m+1}{m+2})     \right) + \lambda_{m+1}   \left(  (1- \tfrac{m+1}{m+2})  - (m+1) + \tfrac{(m+1)^2}{m+2})  \right) \nonumber  \\
&=& \lambda_1 \tfrac{m}{m+2}  - \lambda_{m+1} \tfrac{m}{m+2}   = ( \lambda_1 - \lambda_{m+1})  \tfrac{m}{m+2}, \nonumber
\eea
 we have that
 $ \nu =  \lambda \tfrac{(m+2)}{m} +N = (\lambda_1 -\lambda_{m+1})   + N \in {\Z}$  is an integer.  The proof follows.
\end{proof}

\begin{lemma}\label{KLcat} Assume that $\KL_{k_n}(\g_n)$ is a semi simple rigid braided tensor category. Then  $\KL_{k_{n+1}}(\g_{n+1})$ is a  braided tensor category.
\end{lemma}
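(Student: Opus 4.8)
The plan is to show that $\KL_{k_{n+1}}(\g_{n+1})$ is a category of \emph{finite length} and then invoke \cite{CY}, by which a finite-length Kazhdan--Lusztig category carries a vertex, hence braided, tensor structure. Since every object of $\KL_{k_{n+1}}(\g_{n+1})$ lies in category $\mathcal O$, it is a quotient of a finite direct sum of generalized Verma (Weyl) modules $\widetilde M_{k_{n+1}}(\lambda)$ of $L_{k_{n+1}}(\g_{n+1})$ with $\lambda$ dominant integral; since finite-length objects are closed under finite direct sums, subobjects and quotients, it suffices to prove that each $\widetilde M_{k_{n+1}}(\lambda)$ has finite length. Moreover one may restrict to those $\lambda$ with $L_{k_{n+1}}(\lambda)\in\KL_{k_{n+1}}(\g_{n+1})$, since only these simple heads occur in such presentations.

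Next I would transport the problem to the minimal affine $W$-algebra through $H_{\theta^{(n+1)}}$. By Theorem \ref{thm:min} this functor is exact on $\KL_{k_{n+1}}(\g_{n+1})$; by Lemma \ref{hook identify} its image lies in the category of ordinary $W_{k_{n+1}}(\g_{n+1},\theta^{(n+1)})$-modules; and by Proposition \ref{prop:min} it sends $\widetilde M_{k_{n+1}}(\lambda)$ to a highest weight module over this $W$-algebra, in particular one that is bounded below in conformal weight with finite-dimensional conformal weight spaces, and it sends every simple object of $\KL_{k_{n+1}}(\g_{n+1})$ to a \emph{nonzero} simple module (with explicit highest weight vector $\overline v_\lambda$). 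Exactness then shows that $H_{\theta^{(n+1)}}$ takes any composition series of $\widetilde M_{k_{n+1}}(\lambda)$ to a strictly increasing filtration of $H_{\theta^{(n+1)}}(\widetilde M_{k_{n+1}}(\lambda))$ with nonzero simple quotients, so it is enough to prove that $H_{\theta^{(n+1)}}(\widetilde M_{k_{n+1}}(\lambda))$ has finite length.

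To bound this length I would use Corollary \ref{cor:Aab}: each composition factor of $H_{\theta^{(n+1)}}(\widetilde M_{k_{n+1}}(\lambda))$ is one of the atypical modules $\mathcal A_{a,b}$, equivalently equals $H_{\theta^{(n+1)}}(L_{k_{n+1}}(\mu))$ for a composition factor $L_{k_{n+1}}(\mu)$ of $\widetilde M_{k_{n+1}}(\lambda)$. By Corollary \ref{cor:Aab}(1) together with Lemma \ref{QHlowerbounded}, such a $\mu$ has vanishing $\mathfrak{sl}_m$-part, i.e.\ $\mu=\mu_1\omega_1+\mu_{m+1}\omega_{m+1}$ with $\mu_1,\mu_{m+1}\in\Z_{\geq 0}$; its Heisenberg weight is then linear and its conformal weight $h_\mu$ a strictly convex quadratic function of $(\mu_1,\mu_{m+1})$. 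On the other hand $\mu$ satisfies the affine dominance condition $\mu\le\lambda$ coming from $\widetilde M_{k_{n+1}}(\lambda)$: writing $\lambda-\mu=-c_{\delta}\theta^{(n+1)}+\sum_{i\ge1}c_i\alpha_i$ with all $c_i\ge0$ and $c_\delta=h_\mu-h_\lambda\ge0$, the vanishing of the $\omega_2,\dots,\omega_m$-components of $\lambda$ and $\mu$ forces the intervening multiplicities $c_2,\dots,c_m$ to form an arithmetic progression between $c_1$ and $c_{m+1}$, and the inequalities $\mu_1,\mu_{m+1}\ge0$ translate into upper bounds on $c_1,c_{m+1}$ in terms of $h_\mu-h_\lambda$. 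Combining these with the quadratic growth of $h_\mu$ confines $\mu_1+\mu_{m+1}$ to a bounded range, so only finitely many $\mu$ occur, each with multiplicity bounded by the dimension of a single conformal weight space of $H_{\theta^{(n+1)}}(\widetilde M_{k_{n+1}}(\lambda))$. Hence $H_{\theta^{(n+1)}}(\widetilde M_{k_{n+1}}(\lambda))$, and therefore $\widetilde M_{k_{n+1}}(\lambda)$, has finite length, and $\KL_{k_{n+1}}(\g_{n+1})$ is a braided tensor category by \cite{CY}.

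I expect the main obstacle to be this last finiteness step. Because affine dominance allows the underlying $\mathfrak{sl}_{m+2}$-weight of a composition factor to move \emph{up} in the $\theta^{(n+1)}$-direction, no naive bound on $\mu$ is available; one must genuinely exploit that forcing $\mu$ into the restricted form $\mu_1\omega_1+\mu_{m+1}\omega_{m+1}$ (Lemma \ref{QHlowerbounded}) constrains the root multiplicities, and then balance the linear upward excursion against the quadratic conformal-weight growth, to see that only finitely many admissible $\mu$ remain. Everything else --- the reduction to finite length via \cite{CY}, the exactness and non-vanishing of $H_{\theta^{(n+1)}}$, and the identification of composition factors via Corollary \ref{cor:Aab} --- is formal.
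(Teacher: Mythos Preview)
Your overall strategy --- reduce to showing that highest weight $L_{k_{n+1}}(\g_{n+1})$-modules have finite length and then invoke \cite[Theorem 3.4.3]{CY}, transporting the problem through $H_{\theta^{(n+1)}}$ and using Corollary \ref{cor:Aab} to constrain the possible composition factors --- is exactly the paper's. The divergence is in the finiteness step, and there your argument does not close.

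The difficulty you anticipated is real and your proposed resolution fails in the direction of the inequality. With $\lambda=\lambda_1\omega_1+\lambda_{m+1}\omega_{m+1}$ and $\mu=\mu_1\omega_1+\mu_{m+1}\omega_{m+1}$, the affine dominance constraints $c_i\ge 0$ amount to $d_i+c_0\ge 0$ where $d_i$ is linear in $(\mu_1,\mu_{m+1})$ and $c_0=h_\mu-h_\lambda$ is a positive-definite quadratic. Thus for large $|\mu|$ the quadratic term $c_0$ \emph{dominates} and all constraints are trivially satisfied: the quadratic growth of $h_\mu$ makes the root-lattice inequalities \emph{easier}, not harder, to satisfy, so no bound on $\mu_1+\mu_{m+1}$ emerges. (Indeed, parabolic Verma modules at this level typically have infinitely many composition factors as $V^{k_{n+1}}(\g_{n+1})$-modules; the finiteness must come from the simple-quotient structure, not from weight combinatorics alone.)

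The paper bypasses this entirely with a one-line observation you are missing: the Heisenberg weight is rigid under passage to submodules. For a highest weight submodule $\overline W\subset W$, exactness gives $H_\theta(\overline W)\subset H_\theta(W)$, hence $\mbox{Com}(U_0^{(n)},H_\theta(\overline W))\subset \mbox{Com}(U_0^{(n)},H_\theta(W))$. These commutants have the form $\mathcal F^\ell_{\bar a}\otimes\widetilde{\mathcal M}_{\bar b}$ and $\mathcal F^\ell_{a}\otimes\widetilde{\mathcal M}_{b}$, and since Fock modules are simple one is forced to have $\bar a=a$. With $a$ fixed, the lower-boundedness constraint $|\,\bar b-\tfrac{m+2}{m}a\,|\le \tfrac{3}{2}$ leaves only three possible values of $\bar b$, hence only finitely many possible highest weights for $\overline W$. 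That is the whole argument; no root-lattice balancing is needed.
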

\begin{proof} 
Let $\theta = \theta ^{(n+1)}$.
We use \cite[Theorem 3.4.3]{CY}. It sufficies to   prove  that  every  highest weight module  $W$ in $\KL_{k_{n+1}}(\g_{n+1})$ is of  finite length.  Let $\overline W$ be any proper highest weight submodule of $W$.
Let 
   $M:=H_{\theta} (W)$.  
   
 First we notice from Corollary \ref{cor:Aab} that 
 $$ M = Ind  (U _0 ^{(n)}  \mathcal  \otimes F^{\ell} _a \otimes \widetilde{\mathcal M}_b  ), $$
 where $ \widetilde{\mathcal M}_b$ is a highest weight $\mathcal M$--module that has $\mathcal M_b$ as composition factor and the conformal weight of the top level of 
 $\mathcal M_b$ is minimal among those of all composition factors.
 The following holds:
 $$ b \in \{ \frac{m+2}{m} a, \frac{m+2}{m} a \pm 1\}. $$
 
Let  $N = H_{\theta} (\overline W)$. Then $N \subsetneqq M$ and we have 
$$ N = Ind  (U _0 ^{(n)}  \mathcal  \otimes F^{\ell} _{\bar a} \otimes \widetilde{\mathcal M}_{\bar b}  ), $$
where   $$\bar  b \in \{ \frac{m+2}{m} \bar a, \frac{m+2}{m} \bar a \pm 1\}. $$
Since $  F^{\ell} _{\bar a} \otimes \widetilde{\mathcal M}_{\bar b}= Com (U _0 ^{(n)} , N)$ is a submodule  of  $F^{\ell} _{ a} \otimes \widetilde{\mathcal M}_{b}= Com (U _0 ^{(n)} , M)$, we conclude that $\bar a =  a$. 
Therefore the submodule $N$ has the form
$$ N = Ind  (U _0 ^{(n)}  \mathcal  \otimes F^{\ell} _{a} \otimes \widetilde{\mathcal M}_{\bar b}  ), $$
such that  $$\bar  b \in \{ \frac{m+2}{m} a, \frac{m+2}{m}  a \pm 1\}. $$
Since there are only finitely many possibilities for $\overline b$,     we conclude that $M$ has only finitely many highest weight submodule which come from the  reduction from highest weight submodules of $W$. This proves that $W$ has  finite length.
\end{proof}
By Theorem \ref{thm:robert} we get
\begin{corollary}\label{cor:sc}
The $U_i^{(n)}, i \in {\Z}$ are simple currents, in particular rigid, with fusion rules
$$ U_i ^{(n)} \times U_j ^{(n)} = U_{i+j}^{(n)}, \quad i, j \in {\Z}.$$
\end{corollary}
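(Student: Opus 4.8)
The plan is to combine Lemma \ref{KLcat} with Theorem \ref{thm:robert}. By the induction hypothesis, $\KL_{k_n}(\g_n)$ is a semi-simple rigid braided tensor category, so the hypothesis of Lemma \ref{KLcat} holds and we conclude that $\KL_{k_{n+1}}(\g_{n+1})$ is a braided tensor category. Since Lemma \ref{KLcat} is proved by invoking \cite[Theorem 3.4.3]{CY}, this braided tensor structure is in fact the vertex tensor category structure in the sense of Huang--Lepowsky--Zhang. Because $\g_{n+1} = \mathfrak{sl}_{m+2}$ and $k_{n+1} = -\tfrac{(m+2)+1}{2}$, this says precisely that $\KL_{-\frac{(m+2)+1}{2}}(\mathfrak{sl}_{m+2})$ is a vertex tensor category, i.e. exactly the hypothesis of Theorem \ref{thm:robert} with $m$ replaced by $m+2$ (equivalently $n$ by $n+1$).

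Applying Theorem \ref{thm:robert} in this form then gives immediately that the modules $U_i^{(n+1)}$, $i \in \mathbb Z$ --- which are the modules occurring in the decomposition of the conformal embedding \eqref{eq:ce} of \cite{AKMPP-16}, as spelled out by Theorem \ref{dec-akmpp} applied to $\mathfrak{sl}_{m+2}$ --- are simple currents in $\KL_{k_{n+1}}(\g_{n+1})$, hence in particular rigid, and that the full tensor subcategory they generate is braid-reversed equivalent to the category of Fock modules $\mathcal F_i^{s}$, $i \in \mathbb Z$, with $s = -\tfrac{m+2}{2}$. The fusion rules are then read off by transporting the Heisenberg fusion rules $\mathcal F_i^{s} \times \mathcal F_j^{s} = \mathcal F_{i+j}^{s}$ across this equivalence (a tensor equivalence preserves tensor products), which yields $U_i^{(n+1)} \times U_j^{(n+1)} = U_{i+j}^{(n+1)}$ for all $i,j \in \mathbb Z$. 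Alternatively, once each $U_i^{(n+1)}$ is known to be a simple current, $U_i^{(n+1)} \times U_j^{(n+1)}$ is automatically simple and can be identified through the same equivalence (or, after the classification of simples is in place, by matching its top conformal weight $|i+j|$ via the remark following Theorem \ref{dec-akmpp}).

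I expect no genuine obstacle in this step itself: all of the work has been front-loaded into the earlier lemmata, above all the finite-length statement for highest weight modules of $\KL_{k_{n+1}}(\g_{n+1})$ in Lemma \ref{KLcat} (resting on Corollary \ref{cor:Aab} and Lemmata \ref{lower-bound}--\ref{QHlowerbounded}) and into McRae's orbifold theorem \cite{M} underlying Theorem \ref{thm:robert}. The only point deserving a line of care is the compatibility of the tensor structures: the braided tensor category produced by Lemma \ref{KLcat} must be the very vertex tensor category that Theorem \ref{thm:robert} takes as input, which is automatic since both are the structure furnished by \cite{CY}.
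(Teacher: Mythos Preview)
Your proposal is correct and follows precisely the paper's approach: the paper's entire proof is the single line ``By Theorem \ref{thm:robert} we get'', and you have simply unpacked what this means --- Lemma \ref{KLcat} supplies the vertex tensor category hypothesis for $\KL_{k_{n+1}}(\g_{n+1})$, and then Theorem \ref{thm:robert} (with $m$ replaced by $m+2$) yields the simple-current property and the fusion rules via the braid-reversed equivalence with Fock modules. You are in fact more careful than the paper about the index shift $n \mapsto n+1$: the corollary as printed writes $U_i^{(n)}$, but its use in Lemma \ref{nije+-1} and the outline in the introduction make clear that the intended objects are the $U_i^{(n+1)}$, exactly as you have it.
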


\begin{lemma} \label{nije+-1} Let  $m=2n$ and  $b = \frac{m+2}{m} a \pm 1$.  Assume that $\KL_{k_{n+1}}(\g_{n+1})$ is a braided tensor category.  Then the induced $W_{k_{n+1}}(\g_{n+1}, \theta)$--module $Ind  (U _0 ^{(n)}  \mathcal  \otimes F^{\ell} _a \otimes \widetilde{\mathcal M}_b  )$   does not have the form $H_{\theta}(W)$, where $W$ is a highest weight module from  $\KL_{k_{n+1}}(\g_{n+1})$.
\end{lemma}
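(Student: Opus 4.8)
The plan is to assume for contradiction that $\mathcal{A}:=\Ind(U_0^{(n)}\otimes\mathcal F^\ell_a\otimes\widetilde{\mathcal M}_b)\cong H_\theta(W)$ for some highest weight module $W$ in $\KL_{k_{n+1}}(\g_{n+1})$, and then to pin $W$ down via Proposition \ref{prop:min} until the weight data becomes inconsistent. By the conjugation automorphism of $W_{k_{n+1}}(\g_{n+1},\theta)$ induced by the diagram automorphism of $\g_{n+1}$ — which preserves $\KL_{k_{n+1}}(\g_{n+1})$, negates the Heisenberg field $J$, and sends $\mathcal A_{a,b}$ to $\mathcal A_{-a,-b}$, so exchanges the two cases $b=\tfrac{m+2}{m}a\pm1$ — it is enough to treat $b=\tfrac{m+2}{m}a+1$. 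As a first reduction I would use that $H_\theta$ is exact (Theorem \ref{thm:min}), the non-split sequence \eqref{eq:ses}, and Corollary \ref{cor:Aab} to assume that $W$ is simple and $\widetilde{\mathcal M}_b=\mathcal M_b$, i.e. $\mathcal A=\mathcal A_{a,b}$: in the typical case $\widetilde{\mathcal M}_b\in\{\mathcal V_b,\mathcal V_{b-1}\}$ one extracts the constituent with $b'-\tfrac{m+2}{m}a'=\pm1$ as a submodule or quotient of $H_\theta(W)$, arising from a highest weight subquotient of $W$, and applies the simple case to it.

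The core step is then a short computation. By Lemma \ref{QHlowerbounded} the lower bound of $\mathcal A_{a,b}$ is attained at $i=0$, so Lemma \ref{lower-bound} forces $b\le 0$, and the top level of $\mathcal A_{a,b}$ is $U_0^{(n)}\otimes(\mathcal F^\ell_a)_{\mathrm{top}}\otimes(\mathcal M_b)_{\mathrm{top}}$, which is trivial as an $\mathfrak{sl}_m$-module. Writing $W=\tilde L_{k_{n+1}}(\lambda)$, Proposition \ref{prop:min} then gives $\bar\lambda=0$, hence $\lambda=\lambda_1\omega_1+\lambda_{m+1}\omega_{m+1}$; matching the $J(0)$-eigenvalues gives $\langle\lambda,\omega_1-\omega_{m+1}\rangle=a$, that is $\lambda_1-\lambda_{m+1}=\tfrac{m+2}{m}a=:c$, and $b=c+1\le 0$ forces $c\le-1$, so $\lambda_{m+1}=\lambda_1-c\ge 1$. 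Equating the conformal weight of the top level of $H_\theta(W)$ from Proposition \ref{prop:min} with $\Delta(a,b,0)$ and simplifying exactly as in the proof of Lemma \ref{QHlowerbounded} (using $\langle\omega_i,\omega_j\rangle=\min\{i,j\}-\tfrac{ij}{m+2}$, $k_{n+1}+h^\vee=\tfrac{m+1}{2}$, $\langle\lambda,\theta\rangle=\lambda_1+\lambda_{m+1}$), the difference of the two conformal weights collapses to
\[
\frac{2\lambda_1\lambda_{m+1}}{m+1}+\lambda_{m+1},
\]
which is $\ge\lambda_{m+1}\ge 1>0$. Hence the conformal weights cannot agree, so no such $W$ exists; the case $b=\tfrac{m+2}{m}a-1$ follows by the conjugation automorphism above.

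There is also a tensor-categorical variant matching the phrase ``contradicting fusion rules'': since $\Ind(X)\times\Ind(Y)\cong\Ind(X\times Y)$ and $U_i^{(n)}\times U_j^{(n)}=U_{i+j}^{(n)}$, $\mathcal F^\ell_\alpha\times\mathcal F^\ell_\beta=\mathcal F^\ell_{\alpha+\beta}$, $\mathcal M_i\times\mathcal M_j=\mathcal M_{i+j}$, one gets $\mathcal A_{a,b}\times\mathcal A_{a,b}\cong\mathcal A_{2a,2b}$, and $\bigl|2b-\tfrac{m+2}{m}\,2a\bigr|=2>\tfrac32$, so $\mathcal A_{2a,2b}$ is not lower bounded by Lemma \ref{lower-bound}; but $W\times_{\g_{n+1}}W$ lies in $\KL_{k_{n+1}}(\g_{n+1})$ (Lemma \ref{KLcat}), so $H_\theta(W\times_{\g_{n+1}}W)$ is ordinary (Theorem \ref{thm:min}), contradicting $H_\theta(W\times_{\g_{n+1}}W)\cong H_\theta(W)\times H_\theta(W)\cong\mathcal A_{2a,2b}$. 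The main obstacle in this route — and the reason I would present the first, self-contained one — is the identification $H_\theta(W\times_{\g_{n+1}}W)\cong H_\theta(W)\times H_\theta(W)$, i.e. compatibility of minimal quantum Hamiltonian reduction with fusion products, which is not cleanly available in this generality; in the first route the only genuine work is the constituent-chasing in the typical case and the short positivity identity above.
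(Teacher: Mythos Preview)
Your core argument is correct and lands on exactly the identity the paper uses for the subcase $b=\tfrac{m+2}{m}a+1$ with $b<0$, namely $\Delta_\theta-\Delta(a,b,0)=\tfrac{2}{m+1}\lambda_1\lambda_{m+1}+\lambda_{m+1}>0$. The route, however, differs in one essential respect. The paper keeps \emph{both} signs of $b$ in play: for $b\ge 0$ it deduces from $\Delta_\theta=\Delta(a,b,0)$ the constraint $\lambda_1 = -(m+1)-\tfrac{m(m+1)}{2\lambda_{m+1}-m-1}$ and then uses the braided tensor category hypothesis through Corollary~\ref{cor:sc}, fusing $\tilde L_{k_{n+1}}(\lambda)$ with the simple current $L_{k_{n+1}}(\omega_1)$ and checking that none of the three Pieri summands of $V_{\g_{n+1}}(\lambda)\otimes V_{\g_{n+1}}(\omega_1)$ can occur as the top of a module in $\KL_{k_{n+1}}(\g_{n+1})$, hence $L_{k_{n+1}}(\lambda)\boxtimes L_{k_{n+1}}(\omega_1)=0$, contradicting invertibility of $L_{k_{n+1}}(\omega_1)$. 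You instead dispose of $b>0$ at the outset via Lemma~\ref{QHlowerbounded}: the lower bound of $\mathcal A_{a,b}$ is at $i=0$, which in the branch $b-\tfrac{m+2}{m}a=1$ of Lemma~\ref{lower-bound} forces $b\le 0$. This is legitimate and has the side effect that the tensor-category hypothesis plays no role in your argument. For $b=\tfrac{m+2}{m}a-1$ the paper repeats the computation ``analogously''; your diagram-automorphism reduction is a clean substitute. The one place your write-up is thin is the reduction from a general highest weight $W$ to simple $W$ with $\widetilde{\mathcal M}_b=\mathcal M_b$: you should spell out why some highest-weight subquotient of $W$ still produces an $\mathcal A_{a',b'}$ with $b'-\tfrac{m+2}{m}a'=\pm1$ rather than $0$ (a short comparison of the top conformal weights of the $\mathcal M$-constituents does it). Finally, you are right to flag the second, fusion-based variant: the compatibility $H_\theta(W\times W)\cong H_\theta(W)\times H_\theta(W)$ is not available in this generality.
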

\begin{proof}
Let $\lambda = \lambda_1 \omega_1 + \lambda_{m+1} \omega_{m+1}$ and assume that $W =  \widetilde  L_{k_{n+1}}(\lambda)$  is any highest weight module in $\KL_{k_{n+1}}(\g_{n+1})$ with highest weight $\lambda$.
The conformal weight of the minimal reduction of  $ \widetilde  L_{k_{n+1}}(\lambda)$ is 
\[
\Delta_\theta := \frac{1}{m+2} \lambda_1^2 + \frac{1}{m+2} \lambda_{m+1}^2 + \frac{2}{(m+1)(m+2)} \lambda_1 \lambda_{m+1} + \frac{1}{2} \lambda_1 + \frac{1}{2} \lambda_{m+1}.
\]
We consider the case $b = \frac{m+2}{m} a + 1 = \lambda_1 - \lambda_{m+1} + 1$.
Then the top level of $Ind  (U _0 ^{(n)}  \mathcal  \otimes F^{\ell} _a \otimes \widetilde{\mathcal M}_b  )$ has conformal weight
\[
\Delta \left( a,b,0 \right) = \frac{1}{m+2} b^2 + \frac{|b|}{2} + \frac{m}{m+2} b - \frac{m}{2(m+2)}.
\]
Let $b \geq 0 $. In this case we have 
\[
\Delta_\theta - \Delta \left( a,b,0 \right) = \frac{2}{m+1} \lambda_1 \lambda_{m+1} - \lambda_1 + 2 \lambda_{m+1} -1 ,
\]
so $\Delta_\theta$ and $\Delta \left( a,b,0 \right)$ coincide if and only if
	\begin{equation}\label{eq-1}
	\lambda_1 = -(m+1) - \frac{m^2 + m}{2 \lambda_{m+1} -m -1}.
\end{equation}
Since $\KL_{k_{n+1}}(\g_{n+1})$ is a braided tensor category, it follows that $L_{k_{n+1}}(s \omega_1)$ and $L_{k_{n+1}}(s \omega_{m+1})$, for $s \in \Z_{\geq 0} $, are simple-current modules (Corollary \ref{cor:sc}). Therefore, there is a non-trivial intertwining operator of the type ${ M \choose \widetilde  L_{k_{n+1}}(\lambda) \ \ L_{k_{n+1}}(\omega_1) }$, where $M$ is a certain module from $\KL_{k_{n+1}}(\g_{n+1})$. This implies that the top component of $M$ is a non-trivial summand in the tensor product of $\g_{n+1}$--modules $V_{\g_{n+1}} (\lambda) \otimes V_{\g_{n+1}}(\omega_1)$.

We have 
	\begin{equation*}
	\begin{split}
		 V_{\g_{n+1}}(\lambda_1 \omega_1 + \lambda_{m+1} \omega_{m+1}) \otimes V_{\g_{n+1}}(\omega_1) &=V_{\g_{n+1}}((\lambda_1 + 1) \omega_1 + \lambda_{m+1} \omega_{m+1}) \\& \oplus  V_{\g_{n+1}}(\lambda_1 \omega_1 + (\lambda_{m+1}-1) \omega_{m+1})   \\& \oplus V_{\g_{n+1}}((\lambda_1 - 1) \omega_1 + \omega_2 + \lambda_{m+1} \omega_{m+1}).
	\end{split}
\end{equation*} 
From (\ref{eq-1}) we conclude that none of the $\g_{n+1}$--modules that appear on the right side is a top component of the module in $\KL_{k_{n+1}}(\g_{n+1})$. Therefore, we get $L_{k_{n+1}}(\lambda) \boxtimes L_{k_{n+1}}(\omega_1) = 0$, which is a contradiction to
\begin{equation}\nonumber
\begin{split}
L_{k_{n+1}}(\lambda) &\cong L_{k_{n+1}}(\lambda) \boxtimes L_{k_{n+1}}(\g_{n+1}) \cong L_{k_{n+1}}(\g_{n+1}) \boxtimes (L_{k_{n+1}}(\omega_1) \boxtimes L_{k_{n+1}}(\omega_{m+1}))\\ &\cong 
(L_{k_{n+1}}(\lambda) \boxtimes (L_{k_{n+1}}(\omega_1)) \boxtimes L_{k_{n+1}}(\omega_{m+1}) = 0.
\end{split}
\end{equation}
Let $b < 0$. In this case we have
\[
\Delta_\theta - \Delta \left( a,b,0 \right) = \frac{2}{m+1} \lambda_1 \lambda_{m+1} +  \lambda_{m+1} ,
\]
so $\Delta_\theta$ and $\Delta \left( a,b,0 \right)$ coincide if and only if $\lambda_{m+1} = 0$. But then $b=\lambda_1 + 1 > 0$, which is a contradiction.

Analogous arguments show that the case $b = \frac{m+2}{m} a - 1$ also leads to a contradiction.
\end{proof}

Now we consider the remaining case  $b = a \frac{m+2}{m}$.  In this case the top level of the induced $W_{k_{n+1}}(\g_{n+1}, \theta)$--module $Ind  (U _0 ^{(n)}  \mathcal  \otimes F^{\ell} _a \otimes \widetilde{\mathcal M}_b  )$ has  conformal weight 
\[
\frac{b^2}{2} + \frac{|b|}{2} - \frac{m}{2(m+2)} b^2 = \frac{b^2}{m+2} + \frac{|b|}{2},
\]
which is exactly the conformal weight of the top level of the reduction of $L_{k_{n+1}}(-b \omega_1)$ (resp.   $L_{k_{n+1}}(b \omega_{m+1})$) for $b < 0$ (resp. $b > 0$).


 \begin{theorem}\label{induction step} Assume that $W$ is an highest weight module from $\KL_{k_{n+1}}(\g_{n+1})$. Then $W$ is irreducible and it has the form $W= L_{k_{n+1}}(s \omega_1)$ or $L_{k_{n+1}}(s \omega_{m+1})$ for $s \in {\Z}_{\ge 0}$. 
 Moreover, the category $\KL_{k_{n+1} }(\g_{n+1})$ is a semisimple, rigid braided tensor category.
 \end{theorem}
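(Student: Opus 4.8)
The plan is to transfer the structure of induced $W_{k_{n+1}}(\g_{n+1},\theta^{(n+1)})$-modules back to $\KL_{k_{n+1}}(\g_{n+1})$ through the exact reduction functor $H_{\theta^{(n+1)}}$. By Lemma \ref{KLcat} the category $\KL_{k_{n+1}}(\g_{n+1})$ is a braided tensor category, and by Corollary \ref{cor:sc} the modules $U^{(n+1)}_i$, $i\in\Z$ — which by Theorem \ref{dec-akmpp} are exactly the $L_{k_{n+1}}(s\omega_1)$ and $L_{k_{n+1}}(s\omega_{m+1})$, $s\in\Z_{\ge 0}$ — are simple currents with fusion rules $U^{(n+1)}_i\times U^{(n+1)}_j=U^{(n+1)}_{i+j}$, hence rigid of categorical dimension one. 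It therefore remains to prove: (i) these are all the simple objects; (ii) every highest weight module in $\KL_{k_{n+1}}(\g_{n+1})$ is already simple; (iii) the category is semisimple (rigidity of arbitrary objects is then automatic, being built from rigid simple objects).

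For (i), let $L=L_{k_{n+1}}(\lambda)$, $\lambda=\sum_i\lambda_i\omega_i$, be a simple highest weight module in $\KL_{k_{n+1}}(\g_{n+1})$. By Proposition \ref{prop:min} and Corollary \ref{cor:Aab}(1) we have $H_{\theta^{(n+1)}}(L)\cong\mathcal A_{a,b}$ for suitable $a,b$, and Lemma \ref{nije+-1} excludes $b=\frac{m+2}{m}a\pm1$, so $b=\frac{m+2}{m}a$. By Lemma \ref{lower-bound} the lowest conformal weight of $\mathcal A_{a,b}$ is then attained at $i=0$, so its top level is, as an $\mathfrak{sl}_m$-module, the (trivial) top level of $U^{(n)}_0=L_{k_n}(0)$; since by Proposition \ref{prop:min} this top level contains $V_{\mathfrak{sl}_m}(\bar\lambda)$, we get $\bar\lambda=0$, i.e. $\lambda=\lambda_1\omega_1+\lambda_{m+1}\omega_{m+1}$. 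Comparing the conformal weight $\Delta_\theta$ of $H_{\theta^{(n+1)}}(L)$, computed as in the proof of Lemma \ref{nije+-1}, with $\Delta(a,b,0)=\frac{b^2}{m+2}+\frac{|b|}{2}$, $b=\lambda_1-\lambda_{m+1}$, and simplifying (say for $\lambda_1\ge\lambda_{m+1}$) gives $\lambda_{m+1}\bigl(1+\tfrac{2\lambda_1}{m+1}\bigr)=0$, hence $\lambda_{m+1}=0$; symmetrically, $\lambda_{m+1}\ge\lambda_1$ forces $\lambda_1=0$. Thus $\lambda=s\omega_1$ or $\lambda=s\omega_{m+1}$, and conversely all of these lie in $\KL_{k_{n+1}}(\g_{n+1})$ by Theorem \ref{dec-akmpp}; so $\{U^{(n+1)}_i\mid i\in\Z\}$ is a complete list of simple objects.

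For (ii), let $W$ be a highest weight module in $\KL_{k_{n+1}}(\g_{n+1})$. It has finite length by Lemma \ref{KLcat}, and by (i) all its composition factors are among the $U^{(n+1)}_j$, each of which satisfies $H_{\theta^{(n+1)}}(U^{(n+1)}_j)\cong\mathcal A_{a_j,b_j}\ne 0$ with $b_j=\frac{m+2}{m}a_j$. Applying the exact functor $H_{\theta^{(n+1)}}$ (Theorem \ref{thm:min}) we see that $H_{\theta^{(n+1)}}(W)$ has the same length as $W$, with composition factors of the form $\mathcal A_{a,\,(m+2)a/m}$; the commutant bookkeeping from the proof of Lemma \ref{KLcat} (Fock modules being simple as $\mathcal H$-modules) shows they all share the same $a$, hence are all isomorphic. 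As $H_{\theta^{(n+1)}}(W)$ is a highest weight module over $W_{k_{n+1}}(\g_{n+1},\theta^{(n+1)})$ whose highest weight space is one-dimensional, a highest weight module all of whose composition factors are isomorphic is simple; thus $H_{\theta^{(n+1)}}(W)$ is simple, and exactness of $H_{\theta^{(n+1)}}$ forces $W$ to have length one. So every highest weight module is simple, and hence equals some $L_{k_{n+1}}(s\omega_1)$ or $L_{k_{n+1}}(s\omega_{m+1})$. For (iii), each object of $\KL_{k_{n+1}}(\g_{n+1})$ is a finite iterated extension of the $U^{(n+1)}_j$; since these are simple currents, $\mathrm{Ext}^1(U^{(n+1)}_i,U^{(n+1)}_j)\cong\mathrm{Ext}^1(U^{(n+1)}_{i-j},L_{k_{n+1}}(\g_{n+1}))$, and using rigidity this is $\mathrm{Ext}^1(L_{k_{n+1}}(\g_{n+1}),U^{(n+1)}_{j-i})$; a nonsplit extension of $L_{k_{n+1}}(\g_{n+1})$ by some $U^{(n+1)}_t$ in either order has, by (ii), a simple highest weight submodule equal to one of its two (nonisomorphic, if $t\ne 0$) composition factors, which splits the sequence, while the case $t=0$ is ruled out using that $L_{k_{n+1}}(\g_{n+1})$ occurs in $V^{k_{n+1}}(\g_{n+1})$ with multiplicity one. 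Alternatively, semisimplicity follows by inducing modules up the conformal extension $L_{k_{n+1}}(\g_{n+1})\otimes\mathcal H\hookrightarrow L_{k_{n+1}}(\mathfrak{sl}_{m+3})$, which lands in the semisimple category $\KL_{k_{n+1}}(\mathfrak{sl}_{m+3})$ since $k_{n+1}$ is admissible for $\mathfrak{sl}_{m+3}$, and restricting back, or from the braid reversed equivalence of Theorem \ref{thm:robert} with the semisimple category of Fock modules. Being semisimple with rigid simple objects, $\KL_{k_{n+1}}(\g_{n+1})$ is a rigid braided tensor category.

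I expect the crux to be step (ii): controlling $H_{\theta^{(n+1)}}$ on composition factors — its nonvanishing on the simple modules, its exactness, and above all the fact that all composition factors of $H_{\theta^{(n+1)}}(W)$ carry the same Heisenberg index $a$, which is where Lemma \ref{nije+-1} and the commutant analysis of Lemma \ref{KLcat} do the decisive work. The weight- and conformal-weight bookkeeping in step (i) is comparatively mechanical, and once (i) and (ii) hold, semisimplicity in step (iii) is soft.
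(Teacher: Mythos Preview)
Your argument is correct and follows essentially the same route as the paper: use $H_{\theta^{(n+1)}}$, Lemma \ref{nije+-1}, and the commutant/Heisenberg-weight bookkeeping to force every highest weight module to be simple with highest weight $s\omega_1$ or $s\omega_{m+1}$. Your step (i) is in fact more explicit than the paper (which just says ``from the above analysis''): the computation $\Delta_\theta-\Delta(a,b,0)=\lambda_{m+1}\bigl(1+\tfrac{2\lambda_1}{m+1}\bigr)$ cleanly excludes mixed weights $\lambda_1\omega_1+\lambda_{m+1}\omega_{m+1}$.

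The only place you diverge is step (iii). The paper simply invokes \cite{AKMPP-20} (every highest weight module irreducible $\Rightarrow$ $\KL_k$ semisimple), which is the cleanest route. Your Ext argument can be made to work, but two of the justifications you give are not quite right: the ``multiplicity one in $V^{k_{n+1}}(\g_{n+1})$'' remark does not rule out self-extensions of $U_0^{(n+1)}$ --- the correct reason is that any such length-two self-extension is generated by a single vector in its two-dimensional top space and is therefore itself highest weight, hence simple by (ii), a contradiction; and the braid-reversed equivalence of Theorem \ref{thm:robert} only controls the tensor subcategory generated by the $U_i^{(n+1)}$, not arbitrary extensions in $\KL_{k_{n+1}}(\g_{n+1})$, so it does not by itself give semisimplicity.
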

 \begin{proof}  
 The proof is by induction for $n \ge 4$. The case $m=4$ is proved in \cite{APV}. Assume now that the claim holds for $m=2n$. Then $W_{k_{n+1}}(\g_{n+1}, \theta)$ is the braided  tensor category and we can use Lemma \ref{nije+-1}.
 
 From the above analysis we see that $W$ has the weight $s \omega_1$ or $s\omega_{m+1}$, where $s \in {\Z}_{\ge 0}$. Let us assume that $W$ is a highest weight module with highest weight  $s\omega_1$. Assume that $W$ is not irreducible. Then it has a a proper highest weight submodule   $\overline W \subsetneqq W$. Then $H_{\theta}( \overline W)$ is a proper and non-zero submodule of $H_{\theta}(W)$.  Therefore we have
 $$ H_{\theta}(W) = Ind  (U _0 ^{(n)}  \mathcal  \otimes F^{\ell} _a \otimes  \widetilde{\mathcal   M}_b  ),  H_{\theta}( \overline W) =Ind  (U _0 ^{(n)}  \mathcal  \otimes F^{\ell} _{\overline a} \otimes  \widetilde {\mathcal M}_{\overline b}  ) \subset  H_{\theta}(W).  $$
 Using Lemma  \ref{nije+-1} we have $ b  =\frac{m+2}{m} a$ and   $\overline b  =\frac{m+2}{m} \overline a$.  Note that
  $$ \mbox{Com} ( U _0 ^{(n)}, H_{\theta}(W) ) = F^{\ell} _a \otimes  \widetilde{\mathcal   M}_b, \quad   \mbox{Com} ( U _0 ^{(n)}, H_{\theta}(\overline W) )  =   F^{\ell} _{\overline a} \otimes  \widetilde {\mathcal M}_{\overline b} ,  $$
which implies that 
$$ F^{\ell} _{\overline a} \otimes  \widetilde {\mathcal M}_{\overline b} \subset  F^{\ell} _a \otimes  \widetilde{\mathcal   M}_b, $$
and we conclude that $a = \overline a$. This  gives that $\overline b  =\frac{m+2}{m} \overline a = \frac{m+2}{m}  a = b$. This implies that  $ H_{\theta}( \overline W)  = H_{\theta}(  W)$, and therefore $H_{\theta} ( W / \overline W) = 0. $ This is a contradiction.


 Thus $W$ can not have highest weight submodules which implies that  $W$ is irreducible.
Therefore each highest weigh module $W$ in $\KL_{k_{n+1}}(\g_{n+1})$ must be  irreducible. Now the   main result of \cite{AKMPP-20}   implies that $\KL_{k_{n+1}}(\g_{n+1})$ is semi-simple. The fusion rules and existence of tensor category follows as  in  \cite{APV} and \cite{CY}.
  \end{proof}
This finishes the proof of Theorem \ref{main}.  
 

   \subsection{Ordinary modules for $W_{k_{n+1}}(\g_{n+1}, \theta^{(n+1)})$}\label{Word}
   As an application  of our results in this section above, we 
 can  describe the category of ordinary, irreducible $W_k(\g_{n+1}, \theta^{(n+1)})$--modules.
   
   \begin{theorem}Assume that $M$ is an irreducible  ordinary $W_{k_{n+1}}(\g_{n+1}, \theta^{(n+1)})$--module. Then $M$ is isomorphic to exactly one of the following modules
   \begin{itemize}
   \item The  $ \mathcal A_{a,b} = Ind  (U _0 ^{(n)}  \mathcal  \otimes F^{\ell} _{a} \otimes \mathcal M_{ b}  )$, with  $\frac{m+2}{m} a \in {\Z}$,  $b =  \frac{m+2}{m} a$ or $
   b= \frac{m+2}{m} a+ 1 >0$ or   $b= \frac{m+2}{m} a-  1 < 0$.
       \item The  $ \mathcal A^{(1)} _{a,b} = Ind  (U _{-b} ^{(n)}  \mathcal  \otimes F^{\ell} _{a} \otimes \mathcal M_{ b}  )$, with   $\frac{m+2}{m} a \in {\Z}$, $b  =   \frac{m+2}{m} a +1 <0$.
       \item The  $ \mathcal A^{(-1)} _{a,b} = Ind  (U _{-b} ^{(n)}  \mathcal  \otimes F^{\ell} _{a} \otimes \mathcal M_{ b}  )$, with   $\frac{m+2}{m} a \in {\Z}$, $b  =   \frac{m+2}{m} a -1  > 0$.
   
    \item The  $ \mathcal T  _{\mu, \nu} = Ind  (U _0 ^{(n)}  \mathcal  \otimes F^{\ell} _{\mu } \otimes \mathcal V_{ \nu}  )$, with  $  \nu =\frac{m+2}{m} \mu $ or   $\nu =\frac{m+2}{m} \mu-1$.
       \end{itemize}
       The category of ordinary $W_{k_{n+1}}(\g_{n+1}, \theta^{(n+1)})$--modules is not semi-simple.
   \end{theorem}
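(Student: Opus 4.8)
The plan is to run the simple current machinery of Section~\ref{extVOA} in reverse. By Theorem~\ref{main}, $\mathcal W:=W_{k_{n+1}}(\g_{n+1},\theta^{(n+1)})$ is the commutative algebra $A=\bigoplus_{i\in\Z}U_i^{(n)}\otimes\mathcal F^\ell_i\otimes\mathcal M_i$ of the direct limit completion $\overline{\mathcal C}$ of the Deligne product $\mathcal C_{\mathcal U}^n\boxtimes\mathcal C^\ell_{\mathcal F}\boxtimes\mathcal O_{\mathcal M}$, and the category of $\mathcal W$-modules coincides with the category of local $A$-modules. Hence, by the proposition of Section~\ref{extVOA} (\cite[Prop.~3.2]{CMY2}), every simple $\mathcal W$-module is $\Ind(X)$ for a simple object $X$ of $\overline{\mathcal D}$, and $\Ind(X)\cong\Ind(X')$ precisely when $X'\cong J_i\times X$ for some $i\in\Z$, where $J_i=U_i^{(n)}\otimes\mathcal F^\ell_i\otimes\mathcal M_i$. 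So the first step is to list the simple objects of the Deligne product: using the part of Theorem~\ref{main} identifying the $U_a^{(n)}$, $a\in\Z$, as a complete list of simple $L_{k_n}(\g_n)$-modules, together with the known simple $\mathcal H$- and $\mathcal M$-modules, these are the atypical objects $U_a^{(n)}\otimes\mathcal F^\ell_\mu\otimes\mathcal M_b$ ($b\in\Z$) and the typical objects $U_a^{(n)}\otimes\mathcal F^\ell_\mu\otimes\mathcal V_\nu$ ($\nu\in\C\setminus\Z$).

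Next I would impose the two conditions behind the statement. Membership of $X$ in $\overline{\mathcal D}$ — half-integrality of the conformal grading of $\Ind(X)$ — is the explicit arithmetic condition already recorded just before Lemmas~\ref{lower-bound} and \ref{lowerbound-tipical}. Ordinariness of $\Ind(X)$ reduces to lower-boundedness of its conformal grading: finite-dimensionality of the graded pieces is automatic, because the top conformal weight $\Delta(a,b,i)$ of the $i$-th summand grows linearly in $|i|$ (so only finitely many summands meet a fixed conformal weight), and each summand is already an ordinary $L_{k_n}(\g_n)\otimes\mathcal H\otimes\mathcal M$-module. Lemmas~\ref{lower-bound} and \ref{lowerbound-tipical} translate lower-boundedness into the conditions $b-a\tfrac{m+2}{m}\in\{-1,0,1\}$, resp.\ $\nu-\tfrac{m+2}{m}\mu\in\{0,-1\}$, and in addition pin down which summand realises the minimal conformal weight (the $i=0$ one, or the $i=-b$ one). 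Passing to $J_i$-orbits and reading off one representative of each — with the base chosen to display the summand of minimal conformal weight, which is also the summand that can arise as a quantum Hamiltonian reduction of a module in $\KL_{k_{n+1}}(\g_{n+1})$ — produces the four families listed, the two families $\mathcal A^{(\pm1)}_{a,b}$ arising exactly when the minimal summand is $i=-b\neq0$ and its $\mathfrak{sl}_m$-top is a positive multiple of $\omega_1$ or $\omega_{m-1}$. To close the classification I would check that the listed representatives lie in pairwise distinct $J_i$-orbits: the atypical/typical dichotomy is orbit-invariant, within each type the orbit is determined by the pair of labels $(a-\mu,\,b-a)$, and $J_i\times X\not\cong X$ for $i\neq0$ is immediate since the Heisenberg weight changes.

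For the non-semisimplicity I would exhibit an indecomposable reducible ordinary module. Take $\nu\in\Z_{\ge0}$ and $\mu=\tfrac{m}{m+2}\nu$, so that the locality condition $\nu\equiv\tfrac{m+2}{m}\mu$ holds and, by Lemma~\ref{lowerbound-tipical}, $\mathcal T_{\mu,\nu}=\Ind(U_0^{(n)}\otimes\mathcal F^\ell_\mu\otimes\mathcal V_\nu)$ is an ordinary $\mathcal W$-module; concretely one may take $\mu=\nu=0$, where $\mathcal T_{0,0}$ contains $\Ind(A_0)=\mathcal W$ itself. The induction functor is exact, since $A$ is a direct sum of the invertible objects $J_i$; applying it to $U_0^{(n)}\otimes\mathcal F^\ell_\mu\otimes(-)$ of the non-split exact sequence \eqref{eq:ses} yields an exact sequence of ordinary $\mathcal W$-modules
$$0\to\mathcal A_{\mu,\nu}\to\mathcal T_{\mu,\nu}\to\mathcal A_{\mu,\nu+1}\to0 .$$
It is non-split: by Frobenius reciprocity $\mathrm{Hom}_{\mathcal W}\!\big(\mathcal A_{\mu,\nu+1},\mathcal T_{\mu,\nu}\big)\cong\mathrm{Hom}_{\overline{\mathcal C}}\!\big(U_0^{(n)}\otimes\mathcal F^\ell_\mu\otimes\mathcal M_{\nu+1},\ \bigoplus_{i\in\Z}U_i^{(n)}\otimes\mathcal F^\ell_{\mu+i}\otimes\mathcal V_{\nu+i}\big)$, matching the Heisenberg weight forces $i=0$, and $\mathrm{Hom}_{\mathcal M}(\mathcal M_{\nu+1},\mathcal V_\nu)=0$ precisely because \eqref{eq:ses} does not split. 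Hence $\mathcal T_{\mu,\nu}$ is not semisimple, and therefore neither is the category of ordinary $\mathcal W$-modules.

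The only computational ingredient — the conformal-weight arithmetic — is already carried out in the passages preceding Lemmas~\ref{lower-bound} and \ref{lowerbound-tipical} and in those lemmas. The step I expect to demand the most care is the bookkeeping in the second paragraph: selecting a representative in each $J_i$-orbit so that the four displayed families come out simultaneously exhaustive and pairwise disjoint — in particular, verifying that the sign constraints on $b$ are exactly what keeps $\mathcal A_{a,b}$, $\mathcal A^{(1)}_{a,b}$ and $\mathcal A^{(-1)}_{a,b}$ from overlapping, and that the atypical families are disjoint from the typical one.
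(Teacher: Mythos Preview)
Your proposal is correct and follows essentially the same path as the paper: reduce to induced modules, then apply Lemmas~\ref{lower-bound} and \ref{lowerbound-tipical} for the locality and lower-boundedness constraints, and finally sort the survivors by where the minimal conformal weight sits. The only substantive difference is the starting point: the paper invokes Zhu's algebra theory to see that $M_{\mathrm{top}}$ factors as $U\otimes(\mathcal F^\ell_\mu)_{\mathrm{top}}\otimes(L_\nu)_{\mathrm{top}}$ and hence that $M=\mathcal W\cdot M_{\mathrm{top}}$ is an induced module, whereas you appeal to \cite[Prop.~3.2]{CMY2} directly. Both routes land on the same list. Your non-semisimplicity argument (exactness of $\Ind$ applied to \eqref{eq:ses}, plus Frobenius reciprocity to exclude a splitting) is in fact more explicit than the paper's, which simply asserts that $\mathcal T_{\mu,\nu}$ with $\nu\in\Z$ is indecomposable and reducible.
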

   \begin{proof}
   By using Zhu's algebra theory and previous results  we see that $M_{top} \cong U \otimes  \mathcal (F^{\ell} _{\mu} )_{top} \otimes (L _{\nu})_{top}$  where
   \begin{itemize}
   \item $U$ is  a highest weight, finite-dimensional,  irreducible ${\g_n}$--module which  must be a top component of an irreducible $L_{k_n} (\g_n)$--module in $\KL_{k_n} (\g_n)$.
   \item $(F^{\ell} _{\mu} )_{top}$ is top component of  irreducible $\mathcal H$--module on which $h(0)$ acts as multiplication with $\mu \in {\C}$.
   \item $\nu  \in {\C}$ and   $( L _{\nu} )_{top}$ is a top component of  highest weight   $\mathcal M(2)$--module   such that $ L _{\nu}  = \mathcal M_{\nu}$ if ${\nu}  \in {\Z}$ or $\mathcal L _{\nu} = \mathcal V_{\nu}$ otherwise.
   \end{itemize} 
   Since $ M = W_{k_{n+1}}(\g_{n+1}, \theta^{(n+1)}). M_{top}$, we conclude that $M$ is isomorphic to the induced module
  $$
M \cong  Ind  (U_{i} ^{(n)} \otimes \mathcal F^\ell_{\mu  } \otimes \mathcal  \mathcal L_{\nu}). $$
   We only need to check which induced modules are local and lower bounded, and what is lower bound. But this is discussed also in previous part of this section, so we conclude that  we have two possibilities:
   \begin{itemize}
   \item $\nu  = b \in {\Z}$ (atypical case), then  we set $a =\mu$ and  one gets that  the induced module is local if and only if $a \tfrac{m+2}{m} \in \Z$, and   lower bounded if and only if $\vert b - a \tfrac{m+2}{m} \vert \le \tfrac{3}{2}$.   Using Lemma \ref{lower-bound} we get that the lower bound is attained either for $i =0$    when
  $ b =  a \tfrac{m+2}{m} $, $b =  a \tfrac{m+2}{m} + 1 >0$,  $b =  a \tfrac{m+2}{m} - 1 <0$.
   In these cases $M \cong \mathcal A_{a,b}$. If  $b =  a \tfrac{m+2}{m} + 1 <0$, we get that $M \cong  \mathcal A^{(1)}_{a,b}$, and if $b =  a \tfrac{m+2}{m} - 1 >0$, we get that
   $M \cong  \mathcal A^{(-1)}_{a,b}$.
   \item $ \lambda \in {\C} \setminus {\Z}$ (typical case), then  the induced module is local if and only if  $\nu \equiv \frac{m+2}{m} \mu \ \mbox{mod} \  {\Z}$,  and the lower bounded if and only if $|  \nu - \frac{m+2}{m} \mu + \tfrac{1}{2}    | \leq 1$.  Using Lemma \ref{lowerbound-tipical} we have that the lower bound is attained for $i=0$. This shows that then $M \cong   \mathcal T _{\mu, \nu}$.
   
 \item  If $\nu \in {\Z}$, $\nu \equiv \frac{m+2}{m} \mu \ \mbox{mod} \  {\Z}$,  $|  \nu - \frac{m+2}{m} \mu + \tfrac{1}{2}    | \leq 1  $, then the induced module  $\mathcal T _{\mu, \nu}$ is indecomposable and reducible.
\end{itemize}
The proof follows.
   \end{proof}
   
   Define the following sets:
   \bea
   \mathcal S^{(0)} &=& \{ \mathcal A_{a,b} \ \vert b = \tfrac{m+2}{m} a \in {\Z} \}, \nonumber \\
   \mathcal S^{(1)} &=&   \{ \mathcal A_{a,b} \ \vert b = \tfrac{m+2}{m} a + 1 \in {\Z}_{>0} \},  \nonumber \\
   \mathcal S^{(-1)} &=&    \{ \mathcal A_{a,b} \ \vert b = \tfrac{m+2}{m} a-  1 \in {\Z}_{<0} \}, \nonumber  \\
     \mathcal S^{(typ)} &=&   \{ \mathcal T _{\mu,\nu} \ \vert  \nu \in {\C} \setminus {\Z},\  \nu  = \tfrac{m+2}{m} \mu \ \mbox{or} \  \nu  = \tfrac{m+2}{m} \mu-1\}. \nonumber  \eea
   
   We have the  following  corollary:
   \begin{corollary}  Vertex algebra $W_{k_{n+1}}(\g_{n+1}, \theta^{(n+1)})$ has:
   \begin{itemize}
   \item Infinitely many irreducible highest weight modules $M$ such that $\dim M_{top} =1$.
   In particular $\dim M_{top} =1$  if and only if
   $$  M \in  \mathcal S^{(0)} \bigcup \mathcal S^{(1)}  \bigcup \mathcal S^{(-1)} \bigcup     \mathcal S^{(typ)}.$$

       \item For each $i \in {\Z}$ there  a unique, up to equivalence,  irreducible  $W_{k_{n+1}}(\g_{n+1}, \theta^{(n+1)})$--module  $\mathcal L[i]$ such that $\dim \mathcal L[i]_{top} = \dim V_{\g_n}(i \omega_1)$ for $i \ge 1$, $\dim \mathcal L[i]_{top}  = \dim V_{\g_n}(-i \omega_{2n-1})$ for $i <0$. These modules are realized as
   $$ \mathcal L[i]= \mathcal A^{(1)} _{  -(i+1) \tfrac{m}{m+2}, -i } \ (i > 0), \  \mathcal L[i]= \mathcal A^{(-1)} _{  -(i-1) \tfrac{m}{m+2}, -i } \ (i < 0). $$
   \item  Irreducible module $M$ is a QHR of a module from $\KL_{k_{n+1}}(\g_{n+1}) $ if and only if $M \in    \mathcal S^{(0)}$.
   \end{itemize}
   \end{corollary}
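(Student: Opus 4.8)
The plan is to read all three assertions off the classification of irreducible ordinary $W_{k_{n+1}}(\g_{n+1},\theta^{(n+1)})$--modules just established, so that only bookkeeping with the conformal weights $\Delta(a,b,i)$ and $\Delta(\mu,\nu,i)$ remains. First I would reduce everything to a dimension count on top spaces: since an irreducible $\mathcal H$--module $\mathcal F^\ell_\mu$ and an irreducible $\mathcal M(2)$--module ($\mathcal M_\nu$ for $\nu\in\Z$, $\mathcal V_\nu$ for $\nu\notin\Z$) has one-dimensional top space, the classification gives $M_{top}\cong U^{(n)}_j\otimes(\mathcal F^\ell_\mu)_{top}\otimes(\mathcal L_\nu)_{top}$ and hence $\dim M_{top}=\dim(U^{(n)}_j)_{top}$, where $U^{(n)}_j$ is the $\KL_{k_n}(\g_n)$--object occurring in the top level. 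By Theorem \ref{main}, $(U^{(n)}_0)_{top}$ is one-dimensional, while for $j>0$ (resp.\ $j<0$) one has $(U^{(n)}_j)_{top}\cong V_{\g_n}(j\omega_1)$ (resp.\ $V_{\g_n}(-j\omega_{2n-1})$), of dimension $>1$ depending only on $|j|$. So the statement becomes: for each module on the list, identify the index $j$ of its top-level $\KL_{k_n}$--constituent, i.e.\ the index $i$ realizing the minimum of $\Delta(a,b,i)$ resp.\ $\Delta(\mu,\nu,i)$ — which is exactly what Lemmas \ref{lower-bound} and \ref{lowerbound-tipical} compute.

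For the first bullet I would go family by family through the classification and read off $\dim M_{top}$ from the minimizing summand: for $\mathcal A_{a,b}\in\mathcal S^{(0)}$ and $\mathcal T_{\mu,\nu}\in\mathcal S^{(typ)}$ the minimum is at $i=0$, so $\dim M_{top}=1$; for $\mathcal A^{(1)}_{a,b}$ and $\mathcal A^{(-1)}_{a,b}$ the top-level constituent is the nontrivial $U^{(n)}_{-b}$, so $\dim M_{top}>1$; and the $\mathcal A_{a,b}$ with $b=\tfrac{m+2}{m}a\pm 1$ are handled by the same case analysis of $\Delta(a,b,i)$. Matching the outcomes against the defining conditions of $\mathcal S^{(0)},\mathcal S^{(1)},\mathcal S^{(-1)},\mathcal S^{(typ)}$ yields the characterization of the locus $\{\dim M_{top}=1\}$; that this locus is infinite is clear since $a$ runs over the lattice $\tfrac{m}{m+2}\Z$ and $\mu$ over $\C$, and distinctness of the listed modules is the statement $\Ind(M)\cong\Ind(M')\iff M'\cong J_i\times M$ from the simple-current induction proposition.

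For the second bullet, for $i>0$ I would take $a=-(i+1)\tfrac{m}{m+2}$ and $b=-i$, verify $\tfrac{m+2}{m}a=-(i+1)\in\Z$ and $b=\tfrac{m+2}{m}a+1<0$ so that $\mathcal A^{(1)}_{a,b}$ is on the list, note that its top-level constituent is $U^{(n)}_{-b}=U^{(n)}_i=L_{k_n}(i\omega_1)$ so that $\dim(\mathcal A^{(1)}_{a,b})_{top}=\dim V_{\g_n}(i\omega_1)$, and set $\mathcal L[i]:=\mathcal A^{(1)}_{a,b}$; the case $i<0$ is symmetric, with $\mathcal A^{(-1)}$. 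For uniqueness, by the first bullet any irreducible $M$ with $\dim M_{top}=\dim V_{\g_n}(i\omega_1)>1$ must be of type $\mathcal A^{(\pm1)}_{a',b'}$, with top-level constituent $U^{(n)}_{-b'}$; comparing the $\g_n$--module structure of the top level (equivalently the highest $\g_n$--weight — the dimension alone does not suffice, since $\dim V_{\g_n}(s\omega_1)=\dim V_{\g_n}(s\omega_{2n-1})$) forces $-b'=i$ and the $(+1)$--branch, which pins $(a',b')$ down uniquely.

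For the third bullet I would combine: Corollary \ref{cor:Aab}, so a reduction of a simple $\KL_{k_{n+1}}(\g_{n+1})$--module is some $\mathcal A_{a,b}$; Theorem \ref{induction step}, which identifies those simple objects with the $L_{k_{n+1}}(s\omega_1)$ and $L_{k_{n+1}}(s\omega_{2n+1})$; and Lemma \ref{nije+-1}, which forbids $b=\tfrac{m+2}{m}a\pm1$ for such a reduction — leaving only $b=\tfrac{m+2}{m}a$, i.e.\ $M\in\mathcal S^{(0)}$. Conversely the conformal-weight computation preceding Theorem \ref{induction step} shows that for $b=\tfrac{m+2}{m}a$ the top level of $\Ind(U^{(n)}_0\otimes\mathcal F^\ell_a\otimes\mathcal M_b)$ has exactly the conformal weight — and, by Proposition \ref{prop:min}, the $\mathfrak{gl}_{2n}$--weight — of the reduction of $L_{k_{n+1}}((-b)\omega_1)$ for $b<0$, of $L_{k_{n+1}}(b\omega_{2n+1})$ for $b>0$, and of $L_{k_{n+1}}(\g_{n+1})$ for $b=0$; since that reduction is simple and ordinary (Theorem \ref{thm:min}, Proposition \ref{prop:min}) with these invariants it equals $\mathcal A_{a,b}$, so every member of $\mathcal S^{(0)}$ arises this way. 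The only genuinely delicate point in all of this is the first step — correctly locating, in each family, the summand that realizes the top level, which hinges on the signs of the absolute-value terms $|i|$ and $|b+i|$ in $\Delta(a,b,i)$ relative to the sign of $b-\tfrac{m+2}{m}a$; the related subtlety is that the uniqueness in the second bullet must be read through the highest $\g_n$--weight, and not merely through the dimension, of the top level.
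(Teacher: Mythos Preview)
The paper offers no proof of this corollary; it is stated as an immediate consequence of the preceding classification theorem, whose proof already records for each irreducible ordinary module the summand (hence the $U^{(n)}_j$-factor) realizing its top level. Your proposal is exactly this reading-off carried out in detail, and your observation that the uniqueness in the second bullet really hinges on the $\g_n$-highest weight of the top level rather than on its dimension alone is a correct sharpening of the claim as stated.
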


\section{Second  approach to $ W_k(\mathfrak{sl}_{m+2}, \theta)$}\label{Wdecomp}

We shall now present a more direct decomposition of $ W_k(\mathfrak{sl}_{m+2}, \theta)$ as a $  L_{k+1} (\mathfrak{sl}_m) \otimes \mathcal H \otimes \mathcal M(2)$--module. The advantage of this second approach is that it also works for $m$ odd.

 \subsection{The $\beta \gamma$ vertex algebra $\mathcal S $}
 
   Let $\mathcal S$ be the $\beta \gamma$ vertex algebra (also called the Weyl vertex algebra) generated by fields $\beta(z) = \sum_{n \in {\Z} } \beta(n) z^{-n-1}$, $\gamma (z) = \sum_{n \in {\Z} } \gamma (n) z^{-n}$. 
  Let $J^{\mathcal S}:= :\beta \gamma:$ be the Heisenberg field. It generates the Heisenberg vertex algebra $\mathcal H$.  It is well-known (cf. \cite{KR, Wa}) that  $\mathcal S $ is a completely reducible $\mathcal H \otimes \mathcal M$--module and we have:
 $$\mathcal S = \bigoplus _{i \in {\Z} } \mathcal F^{-1} _i \otimes \mathcal M_i.$$

\subsection{The structure of  $W_k(\mathfrak{sl}_{m+2}, \theta)$}

From Theorem \ref{dec-akmpp}  we have that  $ L_{- \frac{m+1}{2}} (\mathfrak{sl}_m) \otimes \mathcal H$  is conformally embedded into  simple affine vertex algebra $L_{- \frac{m+1}{2}}(\mathfrak{sl}_{m+1})$.    Denote the generator of the Heisenberg subalgebra  inside of $L_{-  \frac{m+1}{2}}(\mathfrak{sl}_{m+1})$ by $J^{(1)}$.
We have the following decomposition:

$$
L_{-  \frac{m+1}{2}}(\mathfrak{sl}_{m+1}) = \bigoplus_{i \in \mathbb Z} U_i^{(m/2)} \otimes \mathcal F_i^s.
$$
 with $s = - \frac{m}{2}$. 
 Define
 $$ h = J^{\mathcal S} - J^{(1)}, \quad \overline h = \frac{ mJ^{\mathcal S} +2 J^{(1)}}{m+2}. $$
 Then $h$ and  $\overline h$  generate the rank one Heisenberg vertex algebras $\mathcal H_1$ and  $\mathcal H_2$, respectively, so that $\mathcal H_1 \otimes \mathcal H_2$ is a subalgebra of  $L_{-  \frac{m+1}{2}}(\mathfrak{sl}_{m+1}) \otimes  \mathcal S$.
  
  \begin{theorem}  \label{w-decomp-nova} Let  $m \in {\Z}_{\ge 0}$, $m \ge 4$. Then 
  $$ W_{ - \frac{m+3}{2}} (\mathfrak{sl}_{m+2}, \theta) \cong \mbox{Com} (\mathcal H_1, L_{- \frac{m+1}{2}}(\mathfrak{sl}_{m+1}) \otimes  \mathcal S ) $$
  and we have the following decomposition 
  $$ W_{ - \frac{m+3}{2}} (\mathfrak{sl}_{m+2}, \theta)  \cong \bigoplus_{i \in {\Z}} U_i ^{(m/2)} \otimes \mathcal F_i ^{\ell} \otimes \mathcal M_i, $$
  with $\ell = - \frac{m}{m+2}$. 
  \end{theorem}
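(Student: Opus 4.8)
The plan is to exploit the two mutually commuting Heisenberg subalgebras $\mathcal H_1 = \langle h\rangle$ and $\mathcal H_2 = \langle \overline h\rangle$ inside $L_{-\frac{m+1}{2}}(\mathfrak{sl}_{m+1}) \otimes \mathcal S$, and to reconcile the known decomposition of the $\beta\gamma$ algebra $\mathcal S = \bigoplus_{i}\mathcal F_i^{-1}\otimes \mathcal M_i$ with the conformal embedding decomposition $L_{-\frac{m+1}{2}}(\mathfrak{sl}_{m+1}) = \bigoplus_i U_i^{(m/2)}\otimes \mathcal F_i^s$, $s=-m/2$, from Theorem \ref{dec-akmpp}. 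First I would compute, on the tensor product, how the generators $J^{\mathcal S}$ and $J^{(1)}$ act on the summand $U_i^{(m/2)}\otimes \mathcal F_i^s \otimes \mathcal F_j^{-1}\otimes \mathcal M_j$ — the point being that $h = J^{\mathcal S}-J^{(1)}$ has Heisenberg weight $j-i$ on it and $\overline h = \frac{mJ^{\mathcal S}+2J^{(1)}}{m+2}$ has weight $\frac{mj+2i}{m+2}$; one also checks the normalization $h(1)h = \frac{m+2}{m}\mathbf 1$ (so $\mathcal H_1$ is the rescaled Heisenberg at level $-\frac{m+2}{m}$, dual to the $\ell=-\frac{m}{m+2}$ appearing in $\mathcal F^\ell$) and $\overline h(1)\overline h = -\frac{m}{m+2}\mathbf 1$, matching the $J$ of the minimal $W$-algebra in Section \ref{Wetal}. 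The commutant $\mathrm{Com}(\mathcal H_1,\, L_{-\frac{m+1}{2}}(\mathfrak{sl}_{m+1})\otimes \mathcal S)$ then picks out exactly the vectors of $\mathcal H_1$-weight zero, i.e.\ the summands with $j=i$, giving
\[
\mathrm{Com}(\mathcal H_1,\, L_{-\tfrac{m+1}{2}}(\mathfrak{sl}_{m+1})\otimes \mathcal S) \;\cong\; \bigoplus_{i\in\Z} U_i^{(m/2)}\otimes \mathrm{Com}\big(\mathcal H_1,\ \mathcal F_i^s\otimes \mathcal F_i^{-1}\otimes \mathcal M_i\big),
\]
and a standard lattice-type computation identifies the inner commutant with $\mathcal F_i^\ell\otimes \mathcal M_i$ for $\ell=-\frac{m}{m+2}$ (the diagonal Heisenberg inside $\mathcal F_i^s\otimes\mathcal F_i^{-1}$ splits off an $\mathcal H_1$ factor and leaves the orthogonal rank-one Heisenberg $\mathcal H_2$-module $\mathcal F_i^\ell$; the singlet $\mathcal M_i$ survives untouched). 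This already yields the stated decomposition as an $L_{k+1}(\mathfrak{sl}_m)\otimes\mathcal H\otimes\mathcal M$-module, where here $k=-\frac{m+3}{2}$ so $k+1=-\frac{m+1}{2}$.

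The second, and main, task is the vertex-algebra isomorphism with $W_{-\frac{m+3}{2}}(\mathfrak{sl}_{m+2},\theta)$ itself. For this I would invoke the same uniqueness machinery used in Lemma \ref{hook identify}: show that the commutant VOA $C := \mathrm{Com}(\mathcal H_1, L_{-\frac{m+1}{2}}(\mathfrak{sl}_{m+1})\otimes\mathcal S)$ is of hook type of type $A$ with the right numerology, and then quote \cite[Theorem 9.1]{CL1} (or the minimal-$W$-algebra uniqueness Theorem of \cite{ACKL}). Concretely: the $i=0$ summand $U_0^{(m/2)}\otimes \mathcal F_0^\ell\otimes\mathcal M_0 = L_{-\frac{m+1}{2}}(\mathfrak{sl}_m)\otimes\mathcal H_2\otimes\mathcal M$ contributes weight-one fields (the $\mathfrak{sl}_m$-currents and $\overline h$), a Virasoro field and the weight-three singlet field $W$ of $\mathcal M$; the $i=\pm1$ summands have top conformal weight $\tfrac32$ by the computation $\Delta(A_i)=\tfrac32|i|$ already recorded in the excerpt; and one must verify $C$ is strongly generated by these, is simple, and has the correct central charge and the correct level $k_{n+1}$ so that the hook-type uniqueness Theorem applies verbatim. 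Since the conformal embedding gives $U_0^{(m/2)}\otimes\mathcal H \hookrightarrow L_{-\frac{m+1}{2}}(\mathfrak{sl}_{m+1})$ conformally, the stress tensor of $C$ is the coset Virasoro, and simplicity follows because both $L_{-\frac{m+1}{2}}(\mathfrak{sl}_{m+1})$ and $\mathcal S$ are simple and the commutant of a Heisenberg in a simple VOA with semisimple Heisenberg action is again simple.

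The step I expect to be the genuine obstacle is the \emph{strong generation} claim — verifying that $C$ is strongly and finitely generated of hook type, i.e.\ that nothing of conformal weight $>\tfrac32$ beyond the expected $W$-algebra generators is needed. In the approach via Lemma \ref{hook identify} this was handled by an explicit symplectic-fermion description of the extension; here, working inside $L_{-\frac{m+1}{2}}(\mathfrak{sl}_{m+1})\otimes\mathcal S$, one does not have that combinatorial crutch directly, so I would instead argue that the surjection of the hook-type $W$-algebra (mapped in by uniqueness, after matching central charge and level) onto $C$ is an isomorphism by comparing graded characters: both sides have the same character because both decompose as $\bigoplus_i U_i^{(m/2)}\otimes\mathcal F_i^\ell\otimes\mathcal M_i$ — the left-hand side by Theorem \ref{main}(4) (valid for even $m$) and, for the general (including odd) $m$, by directly computing the character of $C$ from the decomposition established in the first paragraph and matching it against the known character of $W_{-\frac{m+3}{2}}(\mathfrak{sl}_{m+2},\theta)$ as an $\widetilde L_{-\frac{m+1}{2}}(\mathfrak{sl}_m)\otimes\mathcal H$-module. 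A subtlety to handle carefully is that for odd $m$ the affine piece $\widetilde L_{-\frac{m+1}{2}}(\mathfrak{sl}_m)$ need not be simple a priori, so the identification $U_0^{(m/2)} = L_{-\frac{m+1}{2}}(\mathfrak{sl}_m)$ and the fact that the $U_i^{(m/2)}$ are its simple modules must be taken from Theorem \ref{dec-akmpp} (which holds for all $m\ge4$, not just even $m$) rather than from Theorem \ref{main}; this is precisely why the second approach extends to odd $m$.
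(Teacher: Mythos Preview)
Your decomposition argument is essentially the paper's: change basis in the rank-two Heisenberg to $(h,\overline h)$, identify $\mathcal F_i^{-1}\otimes\mathcal F_j^{s}\cong\mathcal F^{s_1}_{i-j}\otimes\mathcal F^{\ell}_{(mi+2j)/(m+2)}$, and read off the $h$-weight-zero subspace. (One numerical slip: $h(1)h = J^{\mathcal S}(1)J^{\mathcal S} + J^{(1)}(1)J^{(1)} = -1 - \tfrac{m}{2} = -\tfrac{m+2}{2}$, not $\tfrac{m+2}{m}$; this is the $s_1$ in the paper.)

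For the identification with $W_{-\frac{m+3}{2}}(\mathfrak{sl}_{m+2},\theta)$, you overcomplicate matters. The paper simply notes that the commutant $A$ is simple, is generated by the $i=0,\pm1$ summands, and then invokes the minimal-$W$-algebra uniqueness theorem \cite[Theorem~3.2]{ACKL} directly; no separate strong-generation argument or character comparison is needed, since that theorem is tailored precisely to this situation (a simple VOA with the right affine subalgebra and weight-$\tfrac32$ primaries). Your fallback to matching characters is both unnecessary and, for odd $m$, circular or unavailable: the character of the \emph{simple} $W_{-\frac{m+3}{2}}(\mathfrak{sl}_{m+2},\theta)$ at a non-admissible level is not something you know a priori, and for even $m$ you would be quoting Theorem~\ref{main}(4), which is exactly what this section is supposed to reprove independently. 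Drop the character comparison and just verify the hypotheses of \cite[Theorem~3.2]{ACKL}.
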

\begin{proof}

First we notice that $\mathcal H_1 \otimes \mathcal H_2 \cong \mathcal H \otimes \mathcal H^{\mathcal S}$. Then we have
\bea \mathcal F_i ^{-1}  \otimes \mathcal F_j  ^{s}  \cong \mathcal F^{s_1} _{i-j} \otimes F^{\ell}_{\frac{m i + 2 j}{m+2}},  \label{formula-vazna} \eea
with $s_1 = -\frac{m+2}{2}$, and $\ell =  - \frac{m}{m+2}$. 

From the decompositions of $\mathcal S$ and $L_{- \frac{m+1}{2}}(\mathfrak{sl}_{m+1})$ we get that as a $ \mathcal H \otimes \mathcal H^{\mathcal S}$--module:
$$ L_{-\frac{m+1}{2}} (\mathfrak{sl}_{m+1})  \otimes \mathcal S \cong \bigoplus_{i,j \in {\Z}} U_j ^{(m/2)} \otimes \mathcal F_i ^{-1}  \otimes \mathcal F_j  ^{s} \otimes \mathcal M_i. $$
Now (\ref{formula-vazna}) implies that
$$ A =  \mbox{Com} (\mathcal H_1, L_{- \frac{m+1}{2}}(\mathfrak{sl}_{m+1}) \otimes  \mathcal S )  \cong \bigoplus_{i \in {\Z}} U_i ^{(m/2)} \otimes \mathcal F_i ^{\ell} \otimes \mathcal M_i. $$
One can easily show that $A$ is a simple vertex algebra, generated by $$L_{-\frac{m+1}{2}}(\mathfrak{sl}_m) \otimes \mathcal H_2 \otimes \mathcal M(2) + U_1 ^{(m/2)} \otimes \mathcal F_1 ^{\ell} \otimes \mathcal M_1 +  U_{-1} ^{(m/2)} \otimes \mathcal F_{-1} ^{\ell} \otimes \mathcal M_{-1}, $$ and all assumptions of   \cite[Theorem 3.2]{ACKL}  are satisfied. Using this uniqueness result, we get that $A = W_{ -\frac{m+3}{2}} (\mathfrak{sl}_{m+2}, \theta)$. The proof follows.
\end{proof}

\begin{remark} One can  show that a suitable modified version  of  Theorem  \ref{w-decomp-nova} also  holds also for $m=2,3$. In this cases the algebra $U_{0} ^{(m/2)}$ can be defined is certain (infinite) extensions of $L_{-\frac{m+1}{2}} (\mathfrak{sl}_m)$. In particular, the vertex algebra $U^{(1)} _0$ is isomorphic to the vertex algebra $ \mathcal V^{(2)} _0$ already investigated in \cite{  A-TG,  ACGY, AMW, AW}.
\end{remark}

\end{document}